\let\eps\varepsilon
\newtheorem{lemma}{Lemma}
\newtheorem{theorem}{Theorem}
\theoremstyle{definition}
\newtheorem{conj}{Conjecture}
\theoremstyle{remark}
\newtheorem{rem}{Remark}
\def\C{\mathbb C}
\def\Z{{\mathbb Z}}
\def\G{\Gamma}
\title{Hypergeometric evaluation identities and  supercongruences}
\author{Ling Long}
\address{Mathematics Department, Iowa State University, Ames, Iowa, 50011, USA}
\email{linglong@iastate.edu} \subjclass{33C20}
\date{Sep. 17, 2010}
\thanks{The research was supported  by an NSA grant
H98230-08-1-0076}
\begin{document}

\begin{abstract}
  In this article, we provide an application of hypergeometric evaluation identities, including a strange valuation of Gosper,
  to prove several supercongruences related to special valuations of truncated hypergeometric
  series. In particular, we prove a conjecture of van Hamme.  

\end{abstract} \maketitle
\section{Introduction} In this article, we use $p$ to denote an odd
prime. In \cite{Zudilin09-Ramanujan}, Zudilin proved several
Ramanujan-type supercongruences using the Wilf-Zeilberger (WZ)
method. One of them, conjectured by van Hamme, is of the form
 \begin{equation}\label{eq:0}
  \sum_{k=0}^{\frac{p-1}{2}} (4k+1) \left (\frac{(\frac12)_k}{k!}\right )^3(-1)^k\equiv (-1)^{\frac{p-1}{2}}p \mod
  p^3,
\end{equation} where $(a)_k=a(a+1)\cdots (a+k-1)$ is the rising factorial when $a\in \mathbb C$ and $k\in \mathbb N$.

The first proof of \eqref{eq:0} was given by Mortenson in
\cite{Mortenson08}. It is said to be of Ramanujan-type because it is
 a $p$-adic version of the following formula of Ramanujan.
$$ \sum_{k=0}^{\infty} (4k+1) \left (\frac{(\frac12)_k}{k!}\right )^3(-1)^k= \frac{2}{\pi}\,.$$
See \cite{Zudilin09-Ramanujan} for more Ramanujan-type
supercongruences.

In this short note, we will  present a new proof of \eqref{eq:0},
which summarizes our strategy in proving similar type of
supercongruences.

In \cite{MO08}, McCarthy and Osburn proved the following conjecture
of van Hamme \cite{vanHamme97}.
$$\sum_{k=0}^{\frac{p-1}2} (4k+1) \left (\frac{(\frac12)_k}{k!}\right
)^5\equiv \left \{ \begin{array}{lll} -\frac{p}{\G_p(\frac34)^4} &
\mod p^3 & \text{ if } p \equiv 1 \mod 4\,; \\ 0& \mod p^3 & \text{
if } p\equiv 3 \mod 4\,, \end{array}\right .$$ where $\G_p(\cdot)$
denotes the $p$-adic Gamma function.

Another comparable conjecture of van Hamme   is  as follows: for any
prime $p>3$
\begin{eqnarray}\label{conj:vanHamme}
\sum_{k=0}^{\frac{p-1}{2}} (6k+1) \left
(\frac{(\frac12)_k}{k!}\right )^3 4^{-k}&{\equiv}~(-1)^{\frac{p-1}{2}}p \mod
  p^4.
\end{eqnarray}van Hamme said ``we have no
real explanation for our observations". In our exploration, it
becomes clear that such particular kind of supercongruences reflects
extra symmetries, which we are able to interpret using
hypergeometric evaluation identities. Of course, they can also be
seen from other perspectives, such as the WZ method.

Meanwhile, it is known that some of the truncated hypergeometric
series are related to the number of rational points on certain
algebraic varieties over finite fields and further to coefficients
of modular forms. For instance, based on the result of Ahlgren and
Ono in \cite{AO00}, Kilbourn \cite{Kilbourn06} proved that
\begin{eqnarray}\label{eq:Kilbourn}
\sum_{k=0}^{\frac{p-1}{2}}  \left (\frac{(\frac12)_k}{k!}\right )^4
&\equiv& a_p \mod
  p^3,
\end{eqnarray}where $a_p$ is the $p$th coefficient of a weight 4 modular form
\begin{equation}\label{eq:mf}
  \eta(2z)^4\eta(4z)^4:=q\prod_{n\ge 1} (1-q^{2n})^4(1-q^{4n})^4, \ q=e^{2\pi i z}.
\end{equation}This is one instance of  supercongruences
conjectured by Rodriguez-Villegas \cite{R-V-Hyper-CY} which relate
special truncated hypergeometric series values and coefficients of
Heck eigenforms.  In \cite{McCarthy}, the author proved another
supercongruence of this type and his approach provides a general combinatorial framework for all these congruences.

In this note, we will establish a few supercongruences via mainly
hypergeometric evaluation identities, and  combinatorics. Since there exist many amazing hypergeometric evaluation
identities in the literature,  we expect that our approach can be
used to prove  other interesting congruences.
\medskip

Here is a summary of our results.

\begin{theorem}\label{thm:1} Let $p>3$ be a prime and $r$ be a positive integer. Then
  \begin{equation}\label{eq:1}
  \sum_{k=0}^{\frac{p^r-1}{2}} (4k+1) \left (\frac{(\frac12)_k}{k!}\right )^4\equiv p^r \mod
  p^{3+r}\,.
\end{equation}
\end{theorem}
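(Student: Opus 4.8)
The plan is to pull the main term $p^{r}$ out of an exact hypergeometric evaluation and then to show that the leftover is divisible by $p^{3+r}$. Note that $\sum_{k\ge 0}(4k+1)\big((\tfrac12)_k/k!\big)^{4}$ is a divergent very-well-poised ${}_5F_4$ series (its terms are $\sim 4/(\pi^{2}k)$), so no closed form for the infinite sum is available; instead I would perturb one pair of parameters so that the series terminates at $k=\tfrac{p^{r}-1}{2}$ and becomes summable. Writing $n=\tfrac{p^{r}-1}{2}$ and applying the terminating very-well-poised ${}_5F_4$ summation of Dougall with $a=c=\tfrac12$, $b=\tfrac{1+p^{r}}{2}$ and terminating parameter $-n$, one gets
\begin{equation*}
\sum_{k=0}^{n}(4k+1)\,\frac{(\tfrac12)_k^{2}\,\big(\tfrac{1+p^{r}}{2}\big)_k\,\big(\tfrac{1-p^{r}}{2}\big)_k}{(k!)^{2}\,\big(1-\tfrac{p^{r}}{2}\big)_k\,\big(1+\tfrac{p^{r}}{2}\big)_k}
=\frac{(\tfrac32)_n\,\big(\tfrac{1-p^{r}}{2}\big)_n}{\big(1-\tfrac{p^{r}}{2}\big)_n\,n!}=p^{r}.
\end{equation*}
The last equality is elementary: $(\tfrac32)_n=(2n+1)(\tfrac12)_n=p^{r}(\tfrac12)_n$, $\big(\tfrac{1-p^{r}}{2}\big)_n=(-n)_n=(-1)^{n}n!$, and $\big(1-\tfrac{p^{r}}{2}\big)_n=(-1)^{n}(\tfrac12)_n$ (since $2n=p^{r}-1$, the numbers $2i-p^{r}$ with $1\le i\le n$ are precisely the negatives of the odd integers $1,3,\dots,p^{r}-2$).

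Subtracting this identity from the sum in \eqref{eq:1} and pulling $\big((\tfrac12)_k/k!\big)^{2}$ out of each term, I would write
\begin{equation*}
\sum_{k=0}^{n}(4k+1)\Big(\frac{(\tfrac12)_k}{k!}\Big)^{4}-p^{r}
=\sum_{k=0}^{n}(4k+1)\Big(\frac{(\tfrac12)_k}{k!}\Big)^{4}\Bigg(1-\prod_{j=0}^{k-1}\frac{1-p^{2r}/(2j+1)^{2}}{1-p^{2r}/(2j+2)^{2}}\Bigg)=:E.
\end{equation*}
For $j<k\le n$ we have $2j+1,2j+2\le p^{r}-1$, so each factor of the product is congruent to $1$ modulo a positive power of $p$, and in fact $1-\prod_{j<k}(\cdots)=p^{2r}\sum_{m=1}^{2k}(-1)^{m-1}m^{-2}+(\text{higher }p\text{-adic order})$; hence $E$ is $p^{2r}$ times a harmonic-square-weighted truncated ${}_5F_4$-sum, up to higher order. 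The remaining task is to prove this weighted sum vanishes to the precision needed. Modulo $p$ this is a parity argument: a Lucas-type congruence gives $(\tfrac12)_k/k!\equiv(-1)^{k}\binom{n}{k}\pmod p$, so $(4k+1)\big((\tfrac12)_k/k!\big)^{4}$ is odd under $k\mapsto n-k$, while $\sum_{m=1}^{2k}(-1)^{m-1}m^{-2}$ is even under $k\mapsto n-k$ (using $H^{(2)}_{p-1}\equiv H^{(2)}_{(p-1)/2}\equiv 0\pmod p$, i.e.\ Wolstenholme), so the sum over the self-symmetric range $0\le k\le n$ is $\equiv 0\pmod p$. Upgrading this to the modulus $p^{3+r}$ requires carrying the next terms of the expansion of $E$, inserting the appropriate higher Wolstenholme- and Morley-type congruences, and using the auxiliary harmonic-weighted identities obtained by differentiating the ${}_5F_4$ evaluation above in one of its parameters.

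The main obstacle is precisely this $p$-adic estimate of $E$. A term-by-term bound cannot suffice for any $r$: among $0\le k\le n$ there are indices---for instance $k=\tfrac{p-1}{2}\,p^{r-1}$, or $k=n$ when $r=1$---at which $(4k+1)\big((\tfrac12)_k/k!\big)^{4}$ is a $p$-adic unit while $1-\prod_{j<k}(\cdots)$ has valuation only $2r-2(r-1)=2$, so the missing divisibility can come only from cancellation in the sum, and arranging that cancellation uniformly in $r$ is the delicate part; the hypergeometric evaluation and the algebraic simplifications above are routine. The same scheme---an exact hypergeometric evaluation for the main term plus a cancellation argument for the error---drives the companion congruences in this paper, with Gosper's strange valuation replacing Dougall's ${}_5F_4$ sum for the $6k+1$ series of \eqref{conj:vanHamme}.
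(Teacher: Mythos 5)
Your opening move is correct and pleasant: Dougall's terminating very-well-poised ${}_5F_4$ sum with $a=c=\tfrac12$, $b=\tfrac{1+p^r}{2}$ does give the exact identity
\begin{equation*}
\sum_{k=0}^{n}(4k+1)\,\frac{(\tfrac12)_k^{2}\,(\tfrac{1+p^{r}}{2})_k\,(\tfrac{1-p^{r}}{2})_k}{(k!)^{2}\,(1-\tfrac{p^{r}}{2})_k\,(1+\tfrac{p^{r}}{2})_k}=p^{r},
\end{equation*}
and your algebraic evaluation of the right-hand side is right. The gap is everything after that. Your leading-order analysis of $E$ gives $E=p^{2r}\cdot(\text{a sum})+(\text{higher order})$, and your parity argument shows that sum is $\equiv 0\bmod p$ --- but only for $r=1$ (for $r\ge 2$ the weight $\sum_{m\le 2k}(-1)^{m-1}m^{-2}$ contains terms with $p\mid m$ and cannot be reduced mod $p$ termwise). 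Even when it works this yields $E\equiv 0\bmod p^{2r+1}$, which for $r=1$ is $p^{3}$, one full power of $p$ short of the required $p^{3+r}=p^{4}$. The sentence ``upgrading this \dots{} requires carrying the next terms \dots{} and using the auxiliary harmonic-weighted identities obtained by differentiating the ${}_5F_4$'' is precisely the theorem; nothing in your sketch indicates how that extra power of $p$ would be extracted, and you yourself identify it as the obstruction. So the proposal does not constitute a proof.

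The paper sidesteps the error-term estimate entirely by a different mechanism. It applies Whipple's ${}_7F_6\to{}_4F_3$ transformation \eqref{eq:W7.7} with $a=\tfrac12$, $e,f=\tfrac12\pm\tfrac{p^r}{2}$, $c,d=\tfrac12\pm i\tfrac{p^r}{2}$, $g=1$: the point of the \emph{imaginary} pair is that the order-$p^{2r}$ perturbations of $(\tfrac12)_k^4$ coming from the two pairs cancel (by \eqref{eq:CLW}), so the ${}_7F_6$ agrees with the target sum modulo $p^{4r}$ --- no error analysis of the kind you attempt is needed. The transformed side carries an explicit prefactor $p^{2r}$, which reduces the whole theorem to the much weaker statement
$p^{r}\sum_{k=0}^{n}\tfrac{1}{2k+1}\bigl(\tfrac{(1/2)_k}{k!}\bigr)^{2}\equiv 1 \pmod{p^{3}}$,
and \emph{that} is finished exactly, not asymptotically: the partial-fraction identity $(2n+1)\sum_{k=0}^{n}\tfrac{(-1)^k}{2k+1}\binom{n}{k}\binom{n+k}{k}=1$ of Lemma \ref{lem:comiden0}, the congruence $(-1)^k\binom{n}{k}\binom{n+k}{k}\equiv\bigl(\tfrac{(1/2)_k}{k!}\bigr)^2\bmod p^2$, and Cai's mod-$p^{3}$ extension of Morley's congruence (Lemma \ref{thm:Moreley2}) for the boundary term $k=n$. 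These three ingredients --- the imaginary deformation, the reduction from a quartic to a quadratic sum, and the Morley--Cai congruence --- are the substance of the proof, and none of them appears in your proposal.
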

\begin{theorem}\label{thm:2} Let $p>3$ be a prime. Then
\begin{equation}\label{eq:2}
  \sum_{k=0}^{\frac{p-1}{2}} (4k+1) \left (\frac{(\frac12)_k}{k!}\right )^6\equiv ~p\cdot a_p \mod
  p^4\,.
\end{equation} 
\end{theorem}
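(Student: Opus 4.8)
My plan is to deduce \eqref{eq:2} from Kilbourn's congruence \eqref{eq:Kilbourn}, so that the real work is to prove
\[
\sum_{k=0}^{\frac{p-1}{2}}(4k+1)\lambda_k^{6}\;\equiv\; p\sum_{k=0}^{\frac{p-1}{2}}\lambda_k^{4}\pmod{p^{4}},\qquad \lambda_k:=\frac{(1/2)_k}{k!}=\binom{2k}{k}4^{-k},
\]
after which \eqref{eq:Kilbourn} and multiplication by $p$ finish it. First I would record the elementary $p$-adic facts: adding $k$ to itself in base $p$ produces a carry precisely when $\frac{p-1}{2}<k\le p-1$, so $\ord_p(\lambda_k)=0$ for $0\le k\le\frac{p-1}{2}$ and $\ord_p(\lambda_k)\ge 1$ beyond; hence both sums above may be freely extended to the range $0\le k\le p-1$, the discarded tails being divisible by $p^{6}$ and $p^{4}$ respectively. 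Next I would make the well-poised structure explicit: since $\frac{(5/4)_k}{(1/4)_k}=4k+1$, the sum $\sum_k(4k+1)\lambda_k^{6}$ is the very-well-poised series ${}_{7}F_{6}\!\left[\begin{smallmatrix}\tfrac12,\tfrac54,\tfrac12,\tfrac12,\tfrac12,\tfrac12,\tfrac12\\ \tfrac14,1,1,1,1,1\end{smallmatrix};1\right]$ with main parameter $a=\tfrac12$.

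To obtain an honest identity I would terminate this series by replacing one of the five free lower $\tfrac12$'s by $-\tfrac{p-1}{2}$ and apply Whipple's transformation of a terminating very-well-poised ${}_{7}F_{6}$ into a Saalsch\"utzian ${}_{4}F_{3}$. With $a=b=c=d=e=\tfrac12$ and $f=-\tfrac{p-1}{2}$ the Pochhammer prefactor produced by Whipple is $\dfrac{(3/2)_{(p-1)/2}\,(1/2)_{(p-1)/2}}{\bigl(((p-1)/2)!\bigr)^{2}}$, and here is the crucial point: $(3/2)_{(p-1)/2}=\bigl(1+2\cdot\tfrac{p-1}{2}\bigr)(1/2)_{(p-1)/2}=p\,(1/2)_{(p-1)/2}$, so the prefactor equals $p\,\lambda_{(p-1)/2}^{2}$, producing an explicit factor $p$; by Morley's congruence $\binom{p-1}{(p-1)/2}\equiv(-1)^{(p-1)/2}4^{\,p-1}\pmod{p^{3}}$ this is $\equiv p\cdot 4^{\,p-1}\pmod{p^{4}}$. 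The companion ${}_{4}F_{3}$ has top row $\{\tfrac12,\tfrac12,\tfrac12,-\tfrac{p-1}{2}\}$, and modulo $p^{3}$ its terms are $\lambda_k^{4}$ up to a correction coming from the ratio $\frac{(-(p-1)/2)_k}{(1-p/2)_k}=\lambda_k\bigl(1-p\sum_{m=1}^{2k}\tfrac{(-1)^{m-1}}{m}+O(p^{2})\bigr)$; an entirely analogous expansion relates the terminating ${}_{7}F_{6}$ back to the target $\sum_k(4k+1)\lambda_k^{6}$. Matching the two sides, the discrepancies are controlled by harmonic sums $H_{2k}$, $H_{p-1}$, $H_{p-1}^{(2)}$ and the Fermat quotient $q_p(2)$, which I would kill using Wolstenholme-type congruences together with a strange evaluation of Gosper applied to the residual balanced series, just as in the proof of Theorem~\ref{thm:1}.

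The hard part is not the hypergeometric transformation but the $p$-adic bookkeeping in the previous paragraph: Whipple's identity is exact only after termination, so one must show that the error between $\sum(4k+1)\lambda_k^{6}$ and its terminating avatar, the deviation of $\lambda_{(p-1)/2}^{2}$ (hence of $4^{\,p-1}$) from $1$, and the harmonic-sum mismatch inside the ${}_{4}F_{3}$ all cancel modulo $p^{4}$. Pinning down those cancellations — in particular evaluating the weighted sums of the shape $\sum(4k+1)\lambda_k^{6}H_{2k}$ that arise — is where the combinatorics and Gosper's evaluation do the real work; once the displayed congruence is in hand, Kilbourn's theorem \eqref{eq:Kilbourn} gives $\sum_{k=0}^{(p-1)/2}(4k+1)\lambda_k^{6}\equiv p\,a_p\pmod{p^{4}}$.
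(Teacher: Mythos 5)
Your high-level plan is the paper's plan: pass through Whipple's very-well-poised ${}_7F_6\to{}_4F_3$ transformation so that the Pochhammer prefactor contributes the factor $p$ (via $(\tfrac32)_{(p-1)/2}=p\,(\tfrac12)_{(p-1)/2}$ and Morley/Cai), reduce the companion ${}_4F_3$ to $\sum_k\lambda_k^4$, and finish with Kilbourn. But there is a genuine gap in how you terminate the series. By perturbing only \emph{one} upper parameter to $\tfrac{1-p}{2}$ you break the symmetry of the deformation, and every error term you list then enters at \emph{first} order in $p$: the ratio $(\tfrac{1-p}{2})_k/(1-\tfrac p2)_k=\lambda_k\bigl(1-p\sum_{m\le 2k}\tfrac{(-1)^{m-1}}{m}+O(p^2)\bigr)$, the mismatch between the target sum and its terminating avatar, and the deviation of $4^{p-1}$ from $1$ (the Fermat quotient). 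Since the whole expression is multiplied by the prefactor $\approx p$, you need these first-order corrections — weighted sums such as $\sum_k\lambda_k^4\sum_{m\le 2k}\tfrac{(-1)^{m-1}}{m}$ — to vanish modulo $p^2$, and the quoted tools (Wolstenholme, Gosper) give nothing of the sort; the available cancellation lemmas only kill such weighted harmonic sums modulo $p$. You have not shown, and it is far from clear, that these three separate $O(p)$ defects conspire to cancel to the required order.

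The paper circumvents exactly this difficulty by choosing the deformation symmetrically: in \eqref{eq:W7.7} it sets $c,d=\tfrac12\pm i\tfrac p2$, $e,f=\tfrac12\pm\tfrac p2$, and $g=\tfrac12-p^4$, so that every Pochhammer ratio is a power series in $p^2$ (cf.\ \eqref{eq:CLW}, \eqref{eq:CLW2}) and all odd-order terms cancel identically. The only surviving correction is the $p^2$-coefficient, which by Lemma \ref{lem:congr} need only be shown divisible by $p$; it equals a multiple of $\sum_k\lambda_k^4H_{2k}^{(2)}$, killed by the reflection argument of Lemma \ref{lem:thm-key}. Also note that with the symmetric choice the prefactor is exactly $(-1)^{\frac{p-1}{2}}p\cdot(-1)^{\frac{p-1}{2}}\equiv p$, so no Fermat quotient ever appears. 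To repair your argument you would either have to prove the mod-$p^2$ vanishing of the alternating-harmonic-weighted sums directly (a nontrivial supercongruence in its own right) or, more simply, adopt the symmetric parametrization.
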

\begin{conj}\label{conj:1}Let $p>3$ be a prime and $r$ be a positive integer. Then
 \begin{equation}\label{eq:2'}
  \sum_{k=0}^{\frac{p^r-1}{2}} (4k+1) \left (\frac{(\frac12)_k}{k!}\right )^6 {\equiv}~ p^r\cdot a_{p^r} \mod
  p^{3+r},
\end{equation}where $a_{p^r}$ is the $p^r$th coefficient of \eqref{eq:mf}.
\end{conj}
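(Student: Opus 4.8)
The plan is to prove \eqref{eq:2'} by induction on $r$, with the case $r=1$ being Theorem~\ref{thm:2}. Write $S_r:=\sum_{k=0}^{(p^r-1)/2}(4k+1)\left(\frac{(\frac12)_k}{k!}\right)^{6}$, so that $S_0=1$. Because the newform \eqref{eq:mf} has weight $4$, its coefficients satisfy the Hecke recursion $a_{p^r}=a_p a_{p^{r-1}}-p^{3}a_{p^{r-2}}$ for $r\ge 2$; multiplying by $p^r$ and observing that $p^{r+3}a_{p^{r-2}}\equiv 0\pmod{p^{3+r}}$, we get $p^{r}a_{p^r}\equiv p\,a_p\cdot\bigl(p^{r-1}a_{p^{r-1}}\bigr)\pmod{p^{3+r}}$. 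Hence, granting the inductive hypothesis that \eqref{eq:2'} holds for $r-1$, i.e.\ $S_{r-1}\equiv p^{r-1}a_{p^{r-1}}\pmod{p^{2+r}}$, the conjecture for $r$ follows at once from the single ``Dwork-type'' lifting congruence
\begin{equation}\label{eq:dwork}
S_r\ \equiv\ p\,a_p\,S_{r-1}\pmod{p^{3+r}},\qquad r\ge 1.
\end{equation}
For $r=1$ this reads $S_1\equiv p\,a_p\pmod{p^4}$, which is precisely Theorem~\ref{thm:2}; thus \eqref{eq:dwork} serves simultaneously as the base case and as the inductive step, and \eqref{eq:2'} is \emph{equivalent} to it. (This is the analogue, for the motive attached to \eqref{eq:mf}, of the degenerate lifting $S_r\equiv p\,S_{r-1}$ with $a_p$ replaced by $1$ that underlies the prime-power statement of Theorem~\ref{thm:1}.)

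It therefore remains to establish \eqref{eq:dwork}. I would decompose the summation range according to the leading base-$p$ digit, writing $k=a\,p^{r-1}+k'$ with $0\le k'<p^{r-1}$ and $0\le a\le\frac{p-1}{2}$, the top block $a=\frac{p-1}{2}$ being cut off at $k'\le\frac{p^{r-1}-1}{2}$. The ratio $\frac{(\frac12)_{k}}{k!}\big/\frac{(\frac12)_{k'}}{k'!}$ can then be expanded $p$-adically via the Gross--Koblitz formula together with the functional equations of the $p$-adic Gamma function, so that, up to a controlled error, each inner sum over $k'$ reproduces $S_{r-1}$ times a Gauss-sum factor depending only on $a$ and $p$; summing these factors over $a$ should recover exactly $p\,a_p$ as the local $L$-factor of the motive governing Kilbourn's congruence \eqref{eq:Kilbourn}. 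The linear factor $(4k+1)$ must be carried through this expansion and is what produces the extra power of $p$ in \eqref{eq:dwork}, reflecting the ``extra symmetry'' alluded to in the introduction. An alternative, and perhaps more economical, route is to exhibit a Wilf--Zeilberger certificate for the family $S_r$ that is uniform in $r$, in the spirit of Zudilin's proof of \eqref{eq:0}, and to read \eqref{eq:dwork} off the telescoped identity.

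The main obstacle is the $p$-adic precision. Either approach delivers \eqref{eq:dwork} without much difficulty modulo a small fixed power of $p$; the real point is to extract the full modulus $p^{3+r}$, i.e.\ to produce the ``$3$'' that turns \eqref{eq:2'} into a supercongruence. For $r=1$ that gain comes from a hypergeometric evaluation identity, exactly as in the proofs of Theorems~\ref{thm:1} and~\ref{thm:2}; for general $r$ one needs a version of that identity, or of the corresponding WZ certificate, that is compatible with the base-$p$ digit decomposition, with error terms bounded uniformly in $a$ and $r$. Isolating and controlling this $p$-adic perturbation of the evaluation identity is, I expect, the crux of the matter. A secondary technical nuisance is the case $a_p\equiv 0\pmod p$ (supersingular reduction), where the naive unit-root bookkeeping degenerates and the Gauss sums must be handled directly.
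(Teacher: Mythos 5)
This statement is left as a \emph{conjecture} in the paper: only the case $r=1$ is proved there (Theorem~\ref{thm:2}), so there is no proof of the author's to compare yours against --- and your proposal does not close the gap either. The formal part of your argument is sound: since \eqref{eq:mf} is a weight-$4$ newform, the Hecke recursion $a_{p^r}=a_pa_{p^{r-1}}-p^{3}a_{p^{r-2}}$ does show that, writing $S_r$ for the left-hand side of \eqref{eq:2'} as you do, the conjecture for all $r$ is equivalent to the single lifting congruence $S_r\equiv p\,a_p\,S_{r-1}\pmod{p^{3+r}}$, with Theorem~\ref{thm:2} as its case $r=1$. That reduction is correct and worth recording. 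But the lifting congruence is precisely where all of the difficulty lives, and everything you say about it is conditional (``should recover'', ``I expect''); you have produced a reduction and a research plan, not a proof.

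Two concrete obstructions to the plan as sketched. First, Dwork-type digit decompositions of the kind you describe are known to yield such relations only to roughly the precision $p^{r}$; the extra factor $p^{3}$ --- the ``super'' part --- is obtained in the paper, for $r=1$, by combining Whipple's evaluation \eqref{eq:W7.7} with Kilbourn's congruence \eqref{eq:Kilbourn}, and neither ingredient is available at prime powers. Indeed, running the paper's proof of Theorem~\ref{thm:2} with $p$ replaced by $p^r$ reduces the problem to a prime-power extension of Kilbourn's theorem relating $\sum_{k=0}^{(p^r-1)/2}\bigl((\tfrac12)_k/k!\bigr)^4$ to $a_{p^r}$ modulo $p^{3}$, which is itself an open supercongruence at least as hard as the one you are trying to prove; your Gross--Koblitz bookkeeping would have to reprove it along the way, and you give no indication of how. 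Second, the WZ alternative cannot telescope to your target uniformly in $r$, because $p\,a_p$ is a modular, non-hypergeometric quantity: WZ certificates produce Gamma-quotient closed forms on the right-hand side, which is exactly why that method works for \eqref{eq:0} and Theorem~\ref{thm:1} but has no purchase here. The crux you yourself identify --- achieving precision $p^{3+r}$ uniformly in the digit decomposition --- therefore remains entirely open, and the statement should still be regarded as a conjecture.
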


\begin{theorem}\label{thm:3}van Hamme's conjecture \eqref{conj:vanHamme} is true.

\end{theorem}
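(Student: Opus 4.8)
The plan is to obtain \eqref{conj:vanHamme} as the $p$-adic shadow of a closed-form evaluation of a \emph{terminating} hypergeometric series extracted from its left-hand side. Set $n=\tfrac{p-1}2$. Using $(-n)_k=(-1)^kn!/(n-k)!$ and $(n+1)_k=(n+k)!/n!$ one has $(-\tfrac{p-1}2)_k(\tfrac{p+1}2)_k=(\tfrac12-\tfrac p2)_k(\tfrac12+\tfrac p2)_k=(\tfrac12)_k^2\prod_{j=0}^{k-1}\bigl(1-\tfrac{p^2/4}{(j+1/2)^2}\bigr)$. Since each $\tfrac12+j$ with $0\le j<k\le\tfrac{p-1}2$ is a $p$-adic unit, this shows that, up to a correction $\Delta=\pm\tfrac{p^2}4\sum_{k}(6k+1)\tfrac{(1/2)_k^3}{k!^3\,4^{k}}\sum_{j=0}^{k-1}\tfrac1{(j+1/2)^2}+O(p^4)$, the left side of \eqref{conj:vanHamme} equals
\[
T(p)\ :=\ \sum_{k=0}^{(p-1)/2}(6k+1)\,\frac{(\tfrac12)_k\,(-\tfrac{p-1}2)_k\,(\tfrac{p+1}2)_k}{k!^3}\,4^{-k},
\]
a series that terminates at $k=\tfrac{p-1}2$ thanks to the factor $(-\tfrac{p-1}2)_k$.

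Next I would evaluate $T(p)$ in closed form. The feature that keeps the series from fitting the usual very-well-poised pattern (which, with these Pochhammer symbols, would force the linear factor $4k+1$ rather than $6k+1$) is precisely the $6k+1$; Gosper's strange valuation is the hypergeometric identity tailored to this factor, and in the terminating case it evaluates $T(p)$ as a quotient of $\Gamma$-values in arguments such as $\tfrac12\pm\tfrac p2$ and $1\pm\tfrac p2$ — equivalently a ratio of Pochhammer symbols in $n=\tfrac{p-1}2$ — with a net single power of $p$. This is the ``extra symmetry'' the introduction alludes to. Converting the $\Gamma$-quotient to $p$-adic $\Gamma_p$-values by the standard dictionary, writing each argument as $\tfrac12+ap$ or $1+bp$, and Taylor-expanding $\Gamma_p$ around $\tfrac12$ and $1$ to second order (using $\log\Gamma_p$ and its first two derivatives, the reflection formula $\Gamma_p(\tfrac12+x)\Gamma_p(\tfrac12-x)=\pm1$, and the multiplication formula), the leading term is $(-1)^{(p-1)/2}p$, and the subleading contributions involve the harmonic-type sums $\sum_{j=1}^{(p-1)/2}1/(2j-1)$ and $\sum_{j=1}^{(p-1)/2}1/(2j-1)^2$, which combine to $0$ modulo $p^3$ (after the overall $p$) by the $\pm$-symmetry of the arguments and Wolstenholme-type congruences; hence $T(p)\equiv(-1)^{(p-1)/2}p\pmod{p^4}$.

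It then remains to show $\Delta\equiv0\pmod{p^4}$, i.e.\ that $\sum_{k=0}^{(p-1)/2}(6k+1)\tfrac{(1/2)_k^3}{k!^3\,4^{k}}\sum_{j=0}^{k-1}\tfrac1{(j+1/2)^2}\equiv0\pmod{p^2}$. I would derive this by differentiating the same Gosper-type evaluation in the deformation parameter (or from a companion WZ relation) and reading off its subleading coefficient. This last step is where I expect the real difficulty to lie: the modulus in \eqref{conj:vanHamme} is $p^4$, not $p^3$, which forces both the $\Gamma_p$-expansion of the previous paragraph and the analysis of $\Delta$ to be carried to quadratic order, and in particular requires the mod-$p^2$ vanishing of the $(6k+1)$-weighted harmonic sum above — a cancellation that has no analogue for the $(4k+1)$-weighted sum and is the precise arithmetic reason \eqref{conj:vanHamme} holds to the extra power of $p$.
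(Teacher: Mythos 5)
Your reduction to a terminating series is the right instinct, but your specific decomposition breaks the mechanism that makes the proof work. You deform only the numerator, replacing $(\tfrac12)_k^2$ by $(\tfrac12-\tfrac p2)_k(\tfrac12+\tfrac p2)_k$ while leaving $k!^3$ intact. Gosper's strange valuation (and likewise Gessel's identity (31.1)) has a rigid parameter pattern: the pair $\tfrac12\pm\tfrac x2$ upstairs must be accompanied by the pair $1\mp\tfrac x4$ downstairs (they arise as $2b$, $1-2b$ and $a+b+\tfrac12$, $1+a-b$ with $a=\tfrac14$, $b=\tfrac14-\tfrac x4$; the choice $a=\tfrac14$ is what produces the factor $6k+1$). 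Your $T(p)$, with denominator parameters $1,1$ instead of $1\mp\tfrac p4$, does not fit this pattern, so the claimed closed-form $\Gamma$-quotient evaluation of $T(p)$ is unsupported, and your entire second paragraph rests on it. The series that does evaluate in closed form is
\[
\sum_{k=0}^{(p-1)/2}(6k+1)\,\frac{(\tfrac12)_k(\tfrac12-\tfrac p2)_k(\tfrac12+\tfrac p2)_k}{(1)_k(1+\tfrac p4)_k(1-\tfrac p4)_k}\,4^{-k}\;=\;(-1)^{(p-1)/2}p,
\]
an \emph{exact} identity (Lemma \ref{lem:10}, from Gessel's (31.1) combined with the elementary Pochhammer-ratio computation of Lemma \ref{lem:comiden1}); no $p$-adic Gamma expansion is needed there, and none is available for your $T(p)$.

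Second, your error term $\Delta$ requires $\sum_{k}(6k+1)\frac{(1/2)_k^3}{k!^3\,4^k}\sum_{j=0}^{k-1}(j+\tfrac12)^{-2}\equiv 0\pmod{p^2}$, which you explicitly defer; that mod-$p^2$ cancellation is exactly where the difficulty of the $p^4$ modulus sits, and there is no reason to expect differentiation of a (nonexistent) evaluation of $T(p)$ to produce it. The paper never needs such a congruence: it analyzes the \emph{quotient} of the $x$-deformed sum by the undeformed one. Both carry a factor of $p$ (the deformed one equals $(-1)^{(p-1)/2}p$ on the nose; the undeformed one is $\equiv(-1)^{(p-1)/2}p\bmod{p^3}$ by a first pass through Lemma \ref{lem:congr}), so the quotient is a $p$-integral power series in $x^2$, identified via Gosper's valuation with $(1)_{(p-1)/2}^2/\bigl((1-\tfrac x4)_{(p-1)/2}(1+\tfrac x4)_{(p-1)/2}\bigr)$, whose $x^2$-coefficient is a multiple of $H_{(p-1)/2}^{(2)}\equiv0\bmod p$. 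Substituting $x=p$ into the quotient then yields the mod-$p^4$ statement from a mod-$p$ input, precisely because of the shared overall factor of $p$. Your additive decomposition into $T(p)+\Delta$ has no analogue of this bootstrap and leaves the genuinely hard step unproved.
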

\begin{theorem}\label{thm:4}Let $p>3$ be a prime, then
  \begin{eqnarray}\label{eq:8}
\sum_{k=0}^{\frac{p-1}{2}} (6k+1) \left
(\frac{(\frac12)_k}{k!}\right )^3 \frac{(-1)^k}{8^k}&\equiv&
(-1)^{\frac{p^2-1}{8}+\frac{p-1}{2}}p
\mod
  p^2.
\end{eqnarray} 
\end{theorem}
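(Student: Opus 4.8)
The plan is to recognize the truncated sum $\sum_{k=0}^{(p-1)/2}(6k+1)\big((\tfrac12)_k/k!\big)^3(-1)^k/8^k$ as (a truncation of) a terminating ${}_3F_2$-type hypergeometric series that admits a closed-form evaluation of Dougall/Gosper type. Concretely, one starts from a classical identity of the form $\sum_k (6k+1)\frac{(\tfrac12)_k^3}{k!^3}z^k = F(z)$ valid as a formal/analytic identity, specialized or manipulated so that the natural ``upper'' parameter becomes a negative integer $-\tfrac{p-1}{2}$; this converts the infinite sum into the desired truncated sum and evaluates it in terms of products of Pochhammer symbols or $p$-adic Gamma values. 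The factor $(-1)^k/8^k$ suggests the relevant evaluation point is tied to a CM-type value (cf. Gosper's ``strange'' evaluation invoked in the abstract), which is exactly the mechanism that produces the sign $(-1)^{(p^2-1)/8}$ — the Legendre-symbol-like quantity $\left(\tfrac{2}{p}\right) = (-1)^{(p^2-1)/8}$ appearing after evaluation.

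The key steps, in order: (1) locate the hypergeometric evaluation identity whose specialization at the relevant argument reproduces $\sum (6k+1)\frac{(\tfrac12)_k^3}{k!^3}\,(-1)^k 8^{-k}$ when the series is terminated at $k=\tfrac{p-1}{2}$; this is the same strategy used for \eqref{eq:0} and \eqref{conj:vanHamme} earlier in the paper, so the ${}_3F_2$ well-poised structure should be reused. (2) Evaluate the resulting closed form: it will be a ratio of Pochhammer symbols in $\tfrac12$ and $\tfrac{p\pm 1}{4}$ or similar, which one rewrites via the $p$-adic Gamma function $\G_p$. (3) Extract the $p$-adic valuation: show the closed form is $\equiv \pm p \pmod{p^2}$, the single factor of $p$ coming from $(\tfrac12)_{(p-1)/2}$ or an analogous Pochhammer symbol whose numerator passes through a multiple of $p$. (4) Pin down the sign by a Gauss-sum / quadratic-residue computation, reducing $\G_p$-values mod $p$ via Wilson's theorem and the reflection formula $\G_p(x)\G_p(1-x)=\pm1$; the combination of the quadratic character at $2$ and the parity $\tfrac{p-1}{2}$ yields exactly $(-1)^{(p^2-1)/8+(p-1)/2}$.

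The main obstacle I expect is step (1)–(2): identifying the precise evaluation identity that matches the argument $-1/8$ and the linear factor $6k+1$, and then controlling it modulo $p^2$ rather than just modulo $p$. Unlike Theorem \ref{thm:3}, where the congruence is modulo $p^4$ and forces a delicate second-order analysis, here we only need modulo $p^2$, so one extra order of the $p$-adic expansion of the closed form suffices — but getting the evaluation identity into a shape where the $O(p^2)$ error is manifestly a genuine error term (and not hiding an extra factor of $p$) requires care. A secondary, more bookkeeping-type obstacle is the sign: tracking the interplay of $(-1)^{(p-1)/2}$ (from the parity of the truncation length) with $(-1)^{(p^2-1)/8}$ (from $\left(\tfrac{2}{p}\right)$, which enters through the $8^{-k}$ and the evaluation) must be done uniformly for $p\equiv 1,3,5,7\pmod 8$, and it is easy to drop a sign in one of the residue classes; I would verify all four cases numerically for small primes before finalizing.
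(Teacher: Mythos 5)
Your overall strategy --- find a closed-form hypergeometric evaluation at the argument $-\tfrac18$ whose specialization reproduces the truncated sum, then reduce modulo $p^2$ --- is exactly the paper's strategy. But the proposal has a genuine gap: the entire proof hinges on producing the specific evaluation identity, and you leave this as ``the main obstacle,'' i.e.\ unresolved. Without it there is no proof. The identity the paper uses is the last identity on p.~544 of Gessel's WZ paper \cite{Gessel95},
\begin{equation*}
  _4F_3 \left [ \begin{array}{cccccccc} 2a+n+1,& n+1,&\frac{2a}{3}+\frac{n}{3}+\frac{4}{3},&-n; &-\frac{1}{8}\\&a+\frac{3}{2}+n, &\frac{2a}{3}+\frac{n}{3}+\frac{1}{3},& 1+a \end{array} \right
  ]=\frac{(a+\frac{3}{2})_n}{(2a+2)_n}2^n,
\end{equation*}
specialized at $a=-\frac p4$, $n=\frac{p-1}2$; the pair $\frac76,\frac16$ produces the factor $6k+1$, the parameters become $\frac12\pm\frac p2$ and $1\pm\frac p4$, and the series terminates at $k=\frac{p-1}2$. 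Note also that this is naturally a terminating $_4F_3$, not a $_3F_2$ as you suggest, because the linear factor $6k+1=(\frac76)_k/(\frac16)_k$ costs one extra parameter pair.

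Two of your anticipated difficulties also dissolve once the identity is in hand, so your error analysis is aimed at the wrong place. First, the identity above is an \emph{exact} equality whose right-hand side equals $(-1)^{\frac{p^2-1}{8}+\frac{p-1}{2}}p$ on the nose (Lemma \ref{lem:comiden2}), so there is no ``$O(p^2)$ error term hiding a factor of $p$'': the only approximation in the whole proof is replacing $(\frac12+\frac p2)_k(\frac12-\frac p2)_k$ by $(\frac12)_k^2$ and $(1+\frac p4)_k(1-\frac p4)_k$ by $(k!)^2$, each valid modulo $p^2$ by \eqref{eq:CLW} and \eqref{eq:CLW2}. Second, the sign $(-1)^{\frac{p^2-1}{8}+\frac{p-1}{2}}$ requires no $p$-adic Gamma functions, Gauss sums, or case analysis modulo $8$: the ratio $\frac{(\frac{3}{2}-\frac{p}{4})_{(p-1)/2}}{(2-\frac{p}{2})_{(p-1)/2}}2^{(p-1)/2}$ telescopes to $\pm p$, and the sign is determined by an elementary count of the negative factors in the numerator and denominator.
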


\medskip

The author would like to thank Heng Huat Chan and Wadim Zudilin for
their encouragements, enlightening discussions and valuable
comments. In particular, Wadim Zudilin pointed out to the author a
few useful ideas and the reference \cite{Gessel-Stanton82}. The author further thanks the anonymous referees for their  detailed comments,  including pointing out a reference \cite{Cai},  on an earlier version of this article.

\section{Preliminaries}
\subsection{Hypergeometric series} For any
positive integer $r$,
$$_{r+1}F_r\left [ \begin{array}{cccccccc} a_1,&a_2,&\cdots,&a_{r+1};&z\\&
b_1,& \cdots,&b_{r} \end{array} \right ]=\sum_{k\ge 0}
\frac{(a_1)_k\cdots (a_{r+1})_k}{k!(b_1)_k\cdots (b_{r})_k}z^k, $$
where $(a)_k$ is the rising factorial and $z\in \C$. A
hypergeometric series terminates if it is well-defined and at least
one of the $a_i$'s is a negative integer. We will make use of this
fact to produce various truncated hypergeometric series.

By the  definition of rising factorial,
\begin{equation}\label{eq:12}
\frac{(\frac12)_k}{k!}=2^{-2k} \binom{2k}k.
\end{equation}

\subsection{Gamma function}Let $\G(x)$ denote the usual Gamma function which is defined for
all $x\in \mathbb C$ except non-positive integers. It satisfies some
well-known properties such as $${\G(x+1)}=x{\G(x)}.$$ Thus,
$$(a)_k=\frac{\G(a+k)}{\G(a)}$$ when $\G(a)\neq 0$ and $\G(a+k)$ are defined.

Another formula we need is the Euler's reflection formula
$$\displaystyle \G(x)\G(1-x)=\frac{\pi}{\sin(\pi
x)}.$$

\subsection{Some combinatorics}\label{ss:com}We gather here some results in combinatorics to be used
later. It is the author's pleasure to acknowledge that the approaches
used in \eqref{eq:1/2} to \eqref{eq:13} are due to Zudilin.

Here is a key idea of Zudilin for rising factorials, see also
\cite[Lemma 1]{CLW09}
\begin{align}\label{eq:1/2}
\left (\frac12+\eps \right )_k
&=(\frac12+\eps)(\frac12+\eps+1)\dotsb(\frac12+\eps+k-1)\\
&=\left (\frac12 \right
)_k\biggl(1+2\eps\sum_{j=1}^k\frac1{2j-1}+4\eps^2\sum_{1\le i<j\le
k}^k\frac1{(2i-1)(2j-1)}+O(\eps^3)\biggr).\notag
\end{align}
Hence, $\left (\frac12+\eps \right )_k\left (\frac12-\eps \right
)_k$, as a power series of $\eps^2$ can be expanded as follows:
\begin{equation}\label{eq:CLW}
\left (\frac12+\eps \right )_k\left (\frac12-\eps \right )_k =\left
(\frac12
\right)_k^2\biggl(1-4\eps^2\sum_{j=1}^k\frac1{(2j-1)^2}+O(\eps^4)\biggr).
\end{equation}
Similarly,
\begin{equation}\label{eq:CLW2}
(1+\eps)_k(1-\eps)_k
=(1)_k^2\biggl(1-\eps^2\sum_{j=1}^k\frac1{j^2}+O(\eps^4)\biggr).
\end{equation}
Letting $\eps=- \frac {p^r} 2$ and $\eps= \frac {p^r} 2$
respectively in \eqref{eq:1/2} and taking $k$ to be an integer
between $1$ and $\frac{p^r-1}2$, we obtain that
$$(-1)^k\binom{\frac{p^r-1}2}{k}\equiv \frac{ (\frac12)_k}{k!} \mod p, \quad \binom{\frac{p^r-1}2+k}{k}\equiv \frac{ (\frac12)_k}{k!} \mod p.$$
Similarly, letting $\eps=\frac {p^r} 2$ in \eqref{eq:CLW} and $k$ be
an integer between $1$ and $\frac{p^r-1}2$ we have
\begin{equation}
(-1)^k\binom{\frac{p^r-1}2}{k}\binom{\frac{p^r-1}2+k}{k}\equiv \left
( \frac{ (\frac12)_k}{k!}  \right )^2 \mod p^2.
\end{equation}

\begin{lemma}\label{lem:comiden0}For any positive integer $n>1$,
  \begin{equation}\label{eq:13}
  (2n+1) \sum_{k=0}^{n}   \frac{1}{2k+1} \binom{n}{k}\binom{n+k}{k}(-1)^k \,{=}\, 1.
\end{equation}
\end{lemma}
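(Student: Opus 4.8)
The plan is to prove the identity \eqref{eq:13} by recognizing the sum as a terminating ${}_3F_2$ (or rather a contiguous variant of one) and applying a classical evaluation. Writing $S_n := (2n+1)\sum_{k=0}^{n} \frac{1}{2k+1}\binom{n}{k}\binom{n+k}{k}(-1)^k$, I would first rewrite the summand in hypergeometric form. Since $\binom{n}{k} = \frac{(-1)^k(-n)_k}{k!}$, $\binom{n+k}{k} = \frac{(n+1)_k}{k!}$, and $\frac{1}{2k+1} = \frac{(1/2)_k}{(3/2)_k}\cdot\frac{1}{1} \cdot$ (more precisely $\frac{1}{2k+1} = \frac{(\frac12)_k}{(\frac32)_k}$), the sum becomes
\[
S_n = (2n+1)\cdot {}_3F_2\!\left[\begin{array}{ccc} -n, & n+1, & \tfrac12; \\ 1, & \tfrac32 & \end{array}\;1\right],
\]
a terminating, well-poised-looking ${}_3F_2$ at argument $1$. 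The natural tool is then a terminating Saalsch\"utz-type or Dixon-type summation; in fact this particular series is a known limiting/contiguous case whose value is $\frac{1}{2n+1}$, which gives $S_n = 1$ immediately.

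An alternative, more self-contained route — which I would actually prefer for the write-up — is induction on $n$ combined with the telescoping structure already hinted at in Section~\ref{ss:com}. One checks $S_1 = 1$ directly, and then shows $S_{n+1} - S_n = 0$ by expressing the difference of the two sums as a single sum that telescopes, using the Pascal-type relation $\binom{n+1}{k}\binom{n+1+k}{k} - \binom{n}{k}\binom{n+k}{k}$ and the factor $\frac{1}{2k+1}$ to produce a function $g(k)$ with $S_{n+1}-S_n = \sum_k (g(k+1)-g(k))$. This is essentially the Wilf–Zeilberger certificate for the identity, and carrying it out amounts to guessing the right $g(k)$ and verifying one polynomial identity in $k$ and $n$.

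The main obstacle is bookkeeping rather than conceptual: in the hypergeometric approach one must match the series to the precise classical evaluation (the parameters $1$ and $\tfrac32$ in the lower row, together with the $\tfrac12$ upstairs, place it at a degenerate edge of Saalsch\"utz/Dixon, so one should be careful about which contiguous form applies and whether there is a sign or a shift), while in the inductive approach the obstacle is producing the correct telescoping term $g(k)$ and checking the resulting rational-function identity. Either way the verification is routine once the right closed form is in hand, and I would present whichever is shortest; given the combinatorial flavor of the surrounding material, I expect the WZ/telescoping proof to be the cleanest to record here.
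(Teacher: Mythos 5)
Your proposal is correct in substance but takes a genuinely different route from the paper. The paper's proof is a two-line elementary argument: it starts from the partial fraction decomposition
\[
\frac{(t-1)(t-2)\cdots(t-n)}{t(t+1)\cdots (t+n)}=\sum_{k=0}^n(-1)^{n-k}\binom{n}{k}\binom{n+k}{k}\frac{1}{t+k}
\]
and simply sets $t=\tfrac12$, whereupon both sides evaluate directly and the identity falls out with no hypergeometric machinery. Your first route does work, and in fact more cleanly than you fear: with $\binom{n}{k}(-1)^k=\frac{(-n)_k}{k!}$, $\binom{n+k}{k}=\frac{(n+1)_k}{k!}$ and $\frac{1}{2k+1}=\frac{(1/2)_k}{(3/2)_k}$, the sum is $(2n+1)\cdot{}_3F_2\bigl[-n,\,n+1,\,\tfrac12;\,1,\,\tfrac32;\,1\bigr]$, which is exactly balanced since $1+(-n)+(n+1)+\tfrac12=1+\tfrac32$; hence the Pfaff--Saalsch\"utz theorem applies verbatim --- there is no ``degenerate edge'' to negotiate --- and yields $\frac{(-n)_n(1/2)_n}{(1)_n(-n-1/2)_n}=\frac{1}{2n+1}$, so $S_n=1$. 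By contrast, your second (WZ/telescoping) route is only a plan: you have not produced the certificate $g(k)$ nor verified the rational-function identity, so as written it is not a proof and should not be the version you record unless you carry it out. The partial-fraction argument and the Saalsch\"utz argument are of comparable length; the paper's has the advantage of being entirely self-contained, while yours connects the lemma to the classical summation formulas that drive the rest of the paper.
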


\begin{proof}
  We use the following partial fraction decomposition
  \begin{equation*}
    \frac{(t-1)(t-2)\cdots(t-n)}{t(t+1)\cdots (t+n)}=\sum_{k=0}^n(-1)^{n-k}\binom{n}{k}\binom{n+k}{k}\frac{1}{t+k}.
  \end{equation*}Letting $t=\frac12$, it becomes $$(-1)^n\frac{2}{2n+1}=2
  \sum_{k=0}^n(-1)^{n-k}\binom{n}{k}\binom{n+k}{k}\frac{1}{1+2k}\,,$$ which is equivalent to the claim of the Lemma.
\end{proof}
\begin{lemma}\label{lem:comiden1}
  Let $n$ be an odd positive integer. Then
  $$\frac{(\frac{3}{2}-\frac{n}{4})_{\frac{n-1}{2}}(1-\frac{n}{2})_{\frac{n-1}{2}}}{(2-\frac{n}{2})_{\frac{n-1}{2}}(1-\frac{n}{4})_{\frac{n-1}{2}}}=(-1)^{\frac{n-1}2}n.$$
\end{lemma}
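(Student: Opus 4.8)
The plan is to express the whole quotient in terms of the classical Gamma function via $(a)_k=\G(a+k)/\G(a)$ and then reduce it using the functional equation $\G(x+1)=x\G(x)$ together with Euler's reflection formula. Put $m=\frac{n-1}{2}$; since $n$ is odd, $m$ is a non-negative integer, and — again because $n$ is odd — none of $1-\frac n2,\ 2-\frac n2,\ 1-\frac n4,\ \frac32-\frac n4$ is a non-positive integer, so each of the four rising factorials is a genuine ratio of Gamma values. The observation that makes the computation short is that shifting each lower parameter by $m$ produces a convenient number:
\[
\Bigl(1-\tfrac n2\Bigr)+m=\tfrac12,\quad \Bigl(2-\tfrac n2\Bigr)+m=\tfrac32,\quad \Bigl(1-\tfrac n4\Bigr)+m=\tfrac12+\tfrac n4,\quad \Bigl(\tfrac32-\tfrac n4\Bigr)+m=1+\tfrac n4 .
\]
Substituting, the left-hand side becomes
\[
\frac{\G\!\bigl(1+\tfrac n4\bigr)\,\G\!\bigl(1-\tfrac n4\bigr)}{\G\!\bigl(\tfrac32-\tfrac n4\bigr)\,\G\!\bigl(\tfrac12+\tfrac n4\bigr)}\;\cdot\;\frac{\G\!\bigl(\tfrac12\bigr)\,\G\!\bigl(2-\tfrac n2\bigr)}{\G\!\bigl(\tfrac32\bigr)\,\G\!\bigl(1-\tfrac n2\bigr)} .
\]

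I would then evaluate the two fractions in turn. The second collapses immediately using $\G(\tfrac32)=\tfrac12\G(\tfrac12)$ and $\G(2-\tfrac n2)=(1-\tfrac n2)\G(1-\tfrac n2)$, giving $2\bigl(1-\tfrac n2\bigr)=2-n$. For the first, I would peel off one factor from numerator and denominator via $\G(1+\tfrac n4)=\tfrac n4\,\G(\tfrac n4)$ and $\G(\tfrac32-\tfrac n4)=(\tfrac12-\tfrac n4)\,\G(\tfrac12-\tfrac n4)$, and then apply Euler reflection twice:
\[
\G\!\bigl(\tfrac n4\bigr)\G\!\bigl(1-\tfrac n4\bigr)=\frac{\pi}{\sin(\pi n/4)},\qquad \G\!\bigl(\tfrac12-\tfrac n4\bigr)\G\!\bigl(\tfrac12+\tfrac n4\bigr)=\frac{\pi}{\cos(\pi n/4)} ,
\]
so the first fraction equals $\dfrac{n/4}{\tfrac12-\tfrac n4}\,\cot(\pi n/4)=\dfrac{n}{2-n}\,\cot(\pi n/4)$. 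Multiplying the two pieces, the factors $2-n$ cancel and the left-hand side is $n\cot(\pi n/4)$. Since $n$ is odd we have $n\equiv\pm1\pmod4$, hence $\cot(\pi n/4)=\cot(\pm\pi/4)=\pm1=(-1)^{(n-1)/2}$ --- equal to $1$ when $n\equiv1\bmod4$ and to $-1$ when $n\equiv3\bmod4$ --- which is exactly the asserted value $(-1)^{\frac{n-1}{2}}n$.

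I do not expect a real obstacle; the points requiring care are the half-integer bookkeeping, in particular grouping the Gamma factors so that Euler reflection is applied to arguments summing to $1$, and checking that every Gamma value that occurs is finite and nonzero, which holds because $n$ is odd (so $\sin(\pi n/4)$, $\cos(\pi n/4)$ and $2-n$ are nonzero). The case $n=1$ (i.e. $m=0$) is covered by the same computation, all four Pochhammer symbols then equalling $1$. One could instead dispatch $(1-\tfrac n2)_m/(2-\tfrac n2)_m$ combinatorially --- it telescopes to $\bigl(1-\tfrac n2\bigr)/\tfrac12=2-n$ --- but $(\tfrac32-\tfrac n4)_m/(1-\tfrac n4)_m$ does not telescope over the integers, so the reflection formula is the natural tool for it.
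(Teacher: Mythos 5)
Your proof is correct and follows essentially the same route as the paper's: convert each Pochhammer symbol to a ratio of Gamma values, peel off one factor with $\G(x+1)=x\G(x)$, apply Euler's reflection formula to reduce the remaining Gamma quotient to $\cot(\pi n/4)$, and evaluate the sign according to $n\bmod 4$. The only difference is cosmetic (you split the quotient into two fractions before reducing, and you note the nondegeneracy of the parameters explicitly), so nothing further is needed.
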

\begin{proof}
  Using $(a)_k=\frac{\G(a+k)}{\G(a)}$, we have
\begin{eqnarray*}
\frac{(\frac{3}{2}-\frac{n}{4})_{\frac{n-1}{2}}(1-\frac{n}{2})_{\frac{n-1}{2}}}{(2-\frac{n}{2})_{\frac{n-1}{2}}(1-\frac{n}{4})_{\frac{n-1}{2}}}&=&\frac{\G(\frac
32-\frac n4+\frac{n-1}2)\G(\frac12)\G(2-\frac n2)\G(1-\frac n
4)}{\G(\frac 32-\frac n4)\G(1-\frac n 2)\G(\frac32)\G(1-\frac
n4+\frac{n-1}2)}\\
&=&\frac{(1-\frac n 2)}{\frac12}\frac{\frac n 4\cdot  \G(\frac n
4)\G(1-\frac n 4)}{(\frac 12 -\frac n4)\cdot \G(\frac 12 +\frac
n4)\G(\frac 12 -\frac n4)}\\
&=&n\cdot \frac{\sin (\pi /2-\pi n/4)}{\sin (\pi n/4)}\\&=&n\cdot
\cot (\pi n/4)=(-1)^{\frac{n-1}2}n\,.
\end{eqnarray*}
\end{proof}

\begin{lemma}\label{lem:comiden2}Let $n$ be an odd integer. Then
  $$\frac{(\frac{3}{2}-\frac{n}{4})_{\frac{n-1}{2}}}{(2-\frac{n}{2})_{\frac{n-1}{2}}}2^{\frac{n-1}{2}}=(-1)^{\frac{n^2-1}{8}+\frac{n-1}{2}}n.$$
\end{lemma}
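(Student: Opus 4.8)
The plan is to reduce the statement to Lemma~\ref{lem:comiden1}. Write $m=\frac{n-1}{2}$. Every factor of each of the four rising factorials $(\frac32-\frac n4)_m$, $(2-\frac n2)_m$, $(1-\frac n2)_m$, $(1-\frac n4)_m$ is an odd integer divided by a power of $2$, so none of these products vanishes. Dividing the identity claimed in Lemma~\ref{lem:comiden2} by the identity of Lemma~\ref{lem:comiden1} cancels the factor $(-1)^{\frac{n-1}{2}}n$, and reduces the whole problem to the elementary identity
\[
2^{m}\Bigl(1-\tfrac n4\Bigr)_{m}=(-1)^{\frac{n^2-1}{8}}\Bigl(1-\tfrac n2\Bigr)_{m},\qquad m=\tfrac{n-1}{2}.
\]

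To prove this I would evaluate both sides by hand. Using $1-\frac n2=\frac12-m$,
\[
\Bigl(1-\tfrac n2\Bigr)_{m}=\prod_{j=0}^{m-1}\Bigl(\tfrac12-m+j\Bigr)=\frac1{2^{m}}\prod_{j=0}^{m-1}(2j+1-2m)=\frac{(-1)^{m}}{2^{m}}\,(2m-1)!!,
\]
since $2j+1-2m$ runs over $-(2m-1),-(2m-3),\dots,-1$ (here $(2m-1)!!:=1\cdot3\cdots(2m-1)$). Similarly, since $2\bigl(1-\frac n4+j\bigr)=\frac32-m+2j$,
\[
2^{m}\Bigl(1-\tfrac n4\Bigr)_{m}=\prod_{j=0}^{m-1}\Bigl(\tfrac32-m+2j\Bigr)=\frac1{2^{m}}\prod_{j=0}^{m-1}(4j+3-2m).
\]
As $\frac{n^2-1}{8}=\frac{m(m+1)}{2}$, the identity to be shown becomes $\prod_{j=0}^{m-1}(4j+3-2m)=(-1)^{\frac{m(m+1)}{2}+m}(2m-1)!!$.

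The heart of the matter is this last equality. Put $a_j:=4j+3-2m$ for $0\le j\le m-1$. These are $m$ distinct odd integers lying in $[3-2m,\,2m-1]$, so $|a_j|\le 2m-1$; and they are pairwise distinct in absolute value, because $a_i=-a_j$ with $i\neq j$ would force $2(i+j)=2m-3$, impossible by parity. Hence $\{|a_0|,\dots,|a_{m-1}|\}=\{1,3,\dots,2m-1\}$ as multisets, and therefore $\prod_{j}a_j=(-1)^{N}(2m-1)!!$, where $N$ is the number of $j$ with $a_j<0$. A short count gives $N=\bigl\lceil\tfrac{2m-3}{4}\bigr\rceil$ (to be read as $0$ when $m\le1$), and going through the four residue classes of $m$ modulo $4$ shows $N\equiv\frac{m(m+1)}{2}+m\pmod 2$, which is exactly what is needed. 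This parity congruence is the single non-mechanical step, and the place where the $n\equiv\pm1$ versus $n\equiv\pm3\pmod8$ dichotomy enters; it is the step I expect to require the most care.

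An alternative that sidesteps the combinatorial bookkeeping is to compute $\frac{(\frac32-\frac n4)_m}{(2-\frac n2)_m}2^{m}$ head‑on via $(a)_k=\Gamma(a+k)/\Gamma(a)$: after the index shifts and repeated use of $\Gamma(x+1)=x\Gamma(x)$ the quantity becomes $\tfrac{n}{\sqrt\pi}\cdot\tfrac{\Gamma(n/4)\,\Gamma(1-n/2)}{\Gamma(1/2-n/4)}$; applying Euler's reflection formula to $\Gamma(\tfrac12-\tfrac n4)\Gamma(\tfrac12+\tfrac n4)$, the Legendre duplication formula to $\Gamma(\tfrac n4)\Gamma(\tfrac n4+\tfrac12)$, and reflection once more to $\Gamma(\tfrac n2)\Gamma(1-\tfrac n2)$ collapses everything to $\dfrac{\sqrt2\,n\cos(\pi n/4)}{\sin(\pi n/2)}$, which for odd $n$ equals $(-1)^{\frac{n^2-1}{8}+\frac{n-1}{2}}n$ since $\sin(\pi n/2)=(-1)^{\frac{n-1}{2}}$ and $\sqrt2\cos(\pi n/4)=(-1)^{\frac{n^2-1}{8}}$ (a case check modulo $8$). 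This route is shorter but relies on the duplication formula.
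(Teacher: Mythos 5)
Your proposal is correct, and both of your routes differ somewhat from the paper's. The paper proves this lemma by multiplying numerator and denominator out directly, writing
\[
\frac{(\frac{3}{2}-\frac{n}{4})_{\frac{n-1}{2}}}{(2-\frac{n}{2})_{\frac{n-1}{2}}}\,2^{\frac{n-1}{2}}
=\frac{(3-\frac n2)(5-\frac n2)\cdots \frac n2}{(2-\frac n2)(3-\frac n2)\cdots \frac 12},
\]
observing that the magnitude collapses to $n$, and obtaining the sign as $(-1)^{\#}$ with $\#=\lfloor \frac{n/2+1}{2}\rfloor+\lfloor \frac n2\rfloor-2\equiv \frac{n^2-1}{8}+\frac{n-1}{2}\pmod 2$ by counting negative factors. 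Your main argument instead quotients out Lemma~\ref{lem:comiden1}, which isolates the cleaner $\pm1$-valued ratio $2^{m}(1-\frac n4)_m/(1-\frac n2)_m$ and localizes the sign bookkeeping there; the count $N=\lceil\frac{2m-3}{4}\rceil$ and the four-residue-class check of $N\equiv\frac{m(m+1)}{2}+m\pmod 2$ are correct (I verified all four classes, and the nonvanishing justification for the division is sound since every factor has odd numerator). This buys you a cleaner separation of the magnitude (inherited from Lemma~\ref{lem:comiden1}) from the sign, at the cost of importing that lemma; the paper's version is self-contained but its magnitude claim is left terser. Your alternative Gamma-function computation, which I also checked (the constant $2^{m+1-n/2}=\sqrt 2$ comes out right and $\sqrt2\cos(\pi n/4)=(-1)^{(n^2-1)/8}$ holds for all odd $n$), parallels the paper's proof of Lemma~\ref{lem:comiden1} rather than its proof of this lemma, and requires the Legendre duplication formula, which the paper never invokes. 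The only cosmetic caveat is that, as with the paper's own statement, $n$ must implicitly be a positive odd integer for the index $\frac{n-1}{2}$ to make sense, so your appeal to Lemma~\ref{lem:comiden1} (stated for positive odd $n$) loses no generality.
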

\begin{proof}
\begin{eqnarray*}
  \frac{(\frac{3}{2}-\frac{n}{4})_{\frac{n-1}{2}}}{(2-\frac{n}{2})_{\frac{n-1}{2}}}2^{\frac{n-1}{2}}&=&\frac{(3-\frac n2) (5-\frac n2)\cdots \frac
  n2}{(2-\frac n2)(3-\frac n2)\cdots \frac 12}=\text{sgn} \cdot n,
\end{eqnarray*}where $\text{sgn}=(-1)^{\#}$ and $\#$ is the number
of negative terms appearing in the above fraction. It is easy to see
that $\#=\lfloor \frac{\frac n 2 +1}2 \rfloor+\lfloor \frac n
2\rfloor-2\equiv \frac{n^2-1}{8}+\frac{n-1}{2} \mod 2.$
\end{proof}

\begin{lemma}[Cai, \cite{Cai}]\label{thm:Moreley2} For any prime $p>3$ and positive integer $r$,
  \begin{equation}\label{eq:cai}
(-1)^{\frac{p^r-1}2}\binom{{p^r-1}}{\frac{p^r-1}2}\equiv \left (
\frac{ (\frac12)_{\frac{p^r-1}2}}{\frac{p^r-1}2!}  \right )^2 \mod
p^3.
\end{equation}
\end{lemma}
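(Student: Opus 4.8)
The plan is to recast \eqref{eq:cai} as a prime-power analogue of Morley's classical congruence and then prove the latter by descent on the $p$-adic valuations of the indices. Write $m:=\tfrac{p^r-1}{2}$, so $2m=p^r-1$. By \eqref{eq:12} with $k=m$, the right side of \eqref{eq:cai} equals $2^{-4m}\binom{2m}{m}^2=4^{-(p^r-1)}\binom{p^r-1}{m}^2$. Each base-$p$ digit of $m$ is $\tfrac{p-1}{2}$, so the addition $m+m$ produces no carries and hence $\binom{p^r-1}{m}$ is a $p$-adic unit by Kummer's theorem; moreover $4\in\Z_p^{\times}$ because $p$ is odd. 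Multiplying \eqref{eq:cai} by $4^{p^r-1}$ and cancelling the unit $\binom{p^r-1}{m}$ shows that \eqref{eq:cai} is equivalent to
\[
\binom{p^r-1}{m}\ \equiv\ (-1)^{m}\,4^{\,p^r-1}\pmod{p^3},
\]
which for $r=1$ is exactly Morley's congruence.

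First I would record the exact identity $\binom{p^r-1}{m}=(-1)^{m}\prod_{i=1}^{m}\bigl(1-\tfrac{p^r}{i}\bigr)$, obtained by writing the numerator $(m+1)\cdots(2m)$ of $\binom{2m}{m}$ as $\prod_{i=1}^{m}(p^r-i)=(-1)^m m!\prod_{i=1}^m\bigl(1-\tfrac{p^r}{i}\bigr)$ and cancelling $m!$. Grouping the index $i\in[1,m]$ by $s:=\ord_p(i)\in\{0,1,\dots,r-1\}$ and writing $i=p^{s}i'$ with $p\nmid i'$, one checks $\lfloor m/p^{s}\rfloor=\tfrac{p^{r-s}-1}{2}$ and $1-p^r/i=1-p^{r-s}/i'$, so that
\[
\prod_{i=1}^{m}\Bigl(1-\tfrac{p^r}{i}\Bigr)=\prod_{k=1}^{r}Q_k,\qquad
Q_k:=\prod_{\substack{1\le i\le (p^k-1)/2\\ p\,\nmid\, i}}\Bigl(1-\tfrac{p^k}{i}\Bigr).
\]
Since $\sum_{k=1}^{r}(p^{k}-p^{k-1})=p^{r}-1$ telescopes, it then suffices to prove the single uniform estimate $Q_k\equiv 4^{\,p^{k}-p^{k-1}}\pmod{p^3}$ for all $k\ge 1$.

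For $k\ge 3$ both sides are $\equiv1\pmod{p^3}$ ($Q_k\equiv1$ since $p^k\equiv0$, and $4^{p^{k-1}(p-1)}=(4^{p-1})^{p^{k-1}}\equiv1\pmod{p^k}$). For $k=1$, expanding $Q_1=\prod_{i=1}^{(p-1)/2}(1-p/i)$ modulo $p^3$ and comparing with $4^{p-1}=(1+pq_p(2))^{2}$, where $q_p(2):=(2^{p-1}-1)/p$, turns $Q_1\equiv4^{p-1}\pmod{p^3}$ into precisely Morley's congruence, which I would quote (its standard proof rests on $\sum_{i=1}^{p-1}\tfrac1i\equiv0\pmod{p^2}$, $\sum_{i=1}^{p-1}\tfrac1{i^2}\equiv0\pmod p$ for $p>3$, and Glaisher's $\sum_{i=1}^{(p-1)/2}\tfrac1i\equiv-2q_p(2)\pmod p$ with its mod-$p^2$ refinement). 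For $k=2$, modulo $p^3$ one has $Q_2\equiv1-p^2\sum_{p\nmid i,\, i\le(p^2-1)/2}\tfrac1i$; grouping those $i$ by residue mod $p$ (the residues $1,\dots,\tfrac{p-1}{2}$ occurring $\tfrac{p-1}{2}+1$ times each, the residues $\tfrac{p+1}{2},\dots,p-1$ occurring $\tfrac{p-1}{2}$ times each) and using the same classical congruences gives $\sum_{p\nmid i,\, i\le(p^2-1)/2}\tfrac1i\equiv\tfrac{p-1}{2}\sum_{a=1}^{p-1}\tfrac1a+\sum_{a=1}^{(p-1)/2}\tfrac1a\equiv-2q_p(2)\pmod p$, hence $Q_2\equiv1+2p^2q_p(2)\equiv(1+pq_p(2))^{2p}=4^{p^2-p}\pmod{p^3}$. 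The only real work is these two evaluations — the harmonic-sum bookkeeping of Wolstenholme/Glaisher type — but $k=1$ is literally Morley's theorem and $k=2$ a one-line computation, so there is no genuine obstacle; the rest is formal.
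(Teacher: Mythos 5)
Your proof is correct, but be aware that the paper does not actually prove this lemma: it is imported from Cai \cite{Cai}, with only the remark that (after rewriting via \eqref{eq:12}) the case $r=1$ is Morley's congruence \cite{Morley}. What you supply is therefore a genuine addition --- a self-contained reduction of general $r$ to $r=1$. The steps check out: with $m=\frac{p^r-1}{2}$ one has $\binom{p^r-1}{m}=(-1)^m\prod_{i=1}^m\bigl(1-\frac{p^r}{i}\bigr)$; sorting the indices by $s=\ord_p(i)$ and using that every base-$p$ digit of $m$ equals $\frac{p-1}{2}$ (which gives both $\lfloor m/p^s\rfloor=\frac{p^{r-s}-1}{2}$ and, via Kummer, the unit claim for $\binom{p^r-1}{m}$) yields the factorization into the $Q_k$; the exponents telescope to $p^r-1$; for $k\ge 3$ both $Q_k$ and $4^{p^k-p^{k-1}}$ are $1$ mod $p^3$; $Q_1\equiv 4^{p-1}\pmod{p^3}$ is literally Morley and may be quoted exactly as the paper quotes it; and your residue count for $Q_2$ is right (residues $1,\dots,\frac{p-1}{2}$ occur $\frac{p+1}{2}$ times, the rest $\frac{p-1}{2}$ times, so the relevant harmonic sum is $\equiv -2q_p(2)\pmod p$ by Glaisher, giving $Q_2\equiv 1+2p^2q_p(2)\equiv 4^{p^2-p}\pmod{p^3}$). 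Compared with citing Cai, your route makes transparent why the modulus $p^3$ survives the passage from $p$ to $p^r$ --- only the indices with $\ord_p(i)\in\{r-1,r-2\}$ contribute anything modulo $p^3$ --- at the cost of taking Morley's theorem and Glaisher's congruence as black boxes. One small point to state explicitly when you write it up: since all the $Q_k$ and the powers of $4$ are $p$-adic integers, the congruences $Q_k\equiv 4^{p^k-p^{k-1}}\pmod{p^3}$ may indeed be multiplied together.
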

Using \eqref{eq:12}, the congruence \eqref{eq:cai} is equivalent to 
  $$\binom{p^r-1}{\frac{p^r-1}{2}}  \equiv (-1)^{\frac{p^r-1}{2}} 2^{2(p^r-1)} \mod
  p^3.$$
When $r=1$, it is proved by Morley in \cite{Morley}.

\subsection{A generalized harmonic sum}
Let $\displaystyle H_k^{(2)}:=\sum_{j=1}^k \frac1{j^2}$.
\begin{lemma}\label{lem:H_n=0}Let $p>3$ be a prime. We have
$$H_{\frac{p-1}{2}}^{(2)}\equiv 0 \mod p,$$ and
\begin{equation}\label{lem:sumodd^2=0}
\sum_{j=1}^{\frac{p-1}2}\frac{1}{(2j-1)^2}\equiv 0 \mod p.
  \end{equation}
\end{lemma}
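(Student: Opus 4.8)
The plan is to deduce both congruences from the single observation that the complete sum $\sum_{j=1}^{p-1} j^{-2}$ vanishes modulo $p$, and then to extract the appropriate ``half'' of that sum.

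First I would record that $\sum_{j=1}^{p-1}\frac1{j^2}\equiv 0 \mod p$. Indeed, since $p$ is prime the map $j\mapsto j^{-1}$ permutes the residues $1,2,\dots,p-1$ modulo $p$, so this sum is congruent to $\sum_{j=1}^{p-1}j^2=\frac{(p-1)p(2p-1)}{6}$; as $p>3$ we have $\gcd(p,6)=1$, and hence this quantity is divisible by $p$. (Alternatively one could cite Wolstenholme's congruence $H_{p-1}^{(2)}\equiv 0\mod p$, but the $j\mapsto j^{-1}$ argument is self-contained.)

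For the first assertion of the lemma I would then pair the index $j$ with $p-j$. Because $(p-j)^2\equiv j^2\mod p$, the terms $1/j^2$ and $1/(p-j)^2$ are congruent mod $p$, so
$$\sum_{j=1}^{p-1}\frac1{j^2}\equiv 2\sum_{j=1}^{\frac{p-1}2}\frac1{j^2}=2\,H_{\frac{p-1}2}^{(2)}\mod p.$$
Since $2$ is invertible modulo the odd prime $p$, combining this with the previous paragraph yields $H_{\frac{p-1}2}^{(2)}\equiv 0\mod p$. For \eqref{lem:sumodd^2=0} I would split $\{1,2,\dots,p-1\}$ into its even members $2,4,\dots,p-1$ and its odd members $1,3,\dots,p-2$. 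The even members contribute
$$\sum_{i=1}^{\frac{p-1}2}\frac1{(2i)^2}=\frac14\,H_{\frac{p-1}2}^{(2)}\equiv 0\mod p$$
by what was just proved, while the odd members are exactly $\{\,2j-1 : 1\le j\le \tfrac{p-1}2\,\}$ and contribute $\sum_{j=1}^{(p-1)/2}1/(2j-1)^2$. Since the two contributions together give $\sum_{j=1}^{p-1}1/j^2$, which is $\equiv 0\mod p$, the odd sum is $\equiv 0\mod p$ as well.

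I do not expect a genuine obstacle here: the argument is elementary arithmetic in $\F_p$. The only points requiring care are that the hypothesis $p>3$ is genuinely used (to guarantee $p\nmid 6$, hence the divisibility of $\sum_{j=1}^{p-1}j^2$ by $p$), and that $2$ and $4$ are units modulo $p$, which is automatic for an odd prime.
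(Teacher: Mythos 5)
Your argument is correct and complete. All three steps check out: the full sum $\sum_{j=1}^{p-1}j^{-2}\equiv\sum_{j=1}^{p-1}j^{2}=\frac{(p-1)p(2p-1)}{6}\equiv 0\pmod p$ for $p>3$ (via the permutation $j\mapsto j^{-1}$ of nonzero residues); the pairing $j\leftrightarrow p-j$ shows the full sum is $2H_{\frac{p-1}{2}}^{(2)}$ modulo $p$, giving the first congruence; and the even/odd splitting of $\{1,\dots,p-1\}$, with the even part equal to $\frac14H_{\frac{p-1}{2}}^{(2)}$, yields \eqref{lem:sumodd^2=0}. You also correctly flag exactly where $p>3$ and the invertibility of $2$ and $4$ are used. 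The comparison with the paper is somewhat one-sided: the paper offers no proof at all, merely citing Morley's 1895 article, so your write-up supplies a self-contained elementary argument where the original defers entirely to a reference. One small remark: the identity $\sum_{j=1}^{p-1}j^{-2}\equiv 0\pmod p$ can also be obtained directly by the same $j\leftrightarrow p-j$ pairing combined with the observation that $\sum_{j=1}^{p-1}j^{-2}$ runs over all nonzero quadratic residues with multiplicity $2$, but your route through $\sum j^{2}$ is the cleanest; either way the content is standard and your version is a perfectly valid replacement for the citation.
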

\begin{proof}See \cite{Morley}.
\end{proof}
\medskip

Using arguments in \cite{Morley} or elementary congruence, it is easy to see the following Lemma holds.
\begin{lemma}\label{lem:H(n+k)+H(n-k)}Let $p>3$ be a prime, then for
every integer $k$ between 1 and $p-2$
$$H_{k}^{(2)}+H_{p-1-k}^{(2)}\equiv 0 \mod p.$$
\end{lemma}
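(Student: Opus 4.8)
The plan is to reduce the claim to the first part of Lemma~\ref{lem:H_n=0} by a symmetry argument on the residues modulo $p$. First I would observe that the map $j \mapsto p-j$ permutes the residues $\{1,2,\dots,p-1\}$ modulo $p$, and since $(p-j)^2 \equiv j^2 \pmod p$, we have $1/(p-j)^2 \equiv 1/j^2 \pmod p$. Therefore the full sum $\sum_{j=1}^{p-1} 1/j^2$ splits as $\sum_{j=1}^{k} 1/j^2 + \sum_{j=k+1}^{p-1} 1/j^2$, and re-indexing the second sum via $j \mapsto p-j$ turns it into $\sum_{i=1}^{p-1-k} 1/(p-i)^2 \equiv \sum_{i=1}^{p-1-k} 1/i^2 = H_{p-1-k}^{(2)} \pmod p$. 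Hence $H_k^{(2)} + H_{p-1-k}^{(2)} \equiv \sum_{j=1}^{p-1} 1/j^2 \pmod p$ for every $k$ between $1$ and $p-2$.

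It then remains to show $\sum_{j=1}^{p-1} 1/j^2 \equiv 0 \pmod p$. This follows because, as $j$ runs over a complete set of nonzero residues modulo $p$, so does $j^{-1}$, so $\sum_{j=1}^{p-1} 1/j^2 \equiv \sum_{a=1}^{p-1} a^2 = \frac{(p-1)p(2p-1)}{6} \equiv 0 \pmod p$, using $p>3$ so that $6$ is invertible. (Alternatively, one can invoke Lemma~\ref{lem:H_n=0}: taking $k = \frac{p-1}{2}$, which lies between $1$ and $p-2$, gives $2H_{\frac{p-1}{2}}^{(2)} \equiv \sum_{j=1}^{p-1} 1/j^2 \pmod p$, and the left side vanishes mod $p$ by Lemma~\ref{lem:H_n=0}, so the full sum does too; this is the sense in which the Lemma follows from the arguments in \cite{Morley}.) Combining the two steps yields $H_k^{(2)} + H_{p-1-k}^{(2)} \equiv 0 \pmod p$.

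There is essentially no obstacle here; the only point requiring a little care is that the re-indexing is valid for the stated range of $k$ (so that both truncated sums are nonempty and the index substitution stays within $\{1,\dots,p-1\}$), and that $p>3$ is needed to invert $6$ in the direct computation of $\sum a^2$. The whole argument is elementary congruence manipulation, which is why the paper states it can be read off from \cite{Morley} or done by hand.
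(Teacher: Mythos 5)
Your argument is correct and is exactly the ``elementary congruence'' computation the paper alludes to without writing out: the paper offers no explicit proof, only the remark that the lemma follows from \cite{Morley} or elementary congruences. Both steps — the re-indexing $j\mapsto p-j$ showing $H_k^{(2)}+H_{p-1-k}^{(2)}\equiv\sum_{j=1}^{p-1}j^{-2}\pmod p$, and the vanishing of the full sum via $\sum_{a=1}^{p-1}a^2=\frac{(p-1)p(2p-1)}{6}$ for $p>3$ — check out, including the range condition on $k$.
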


\medskip

\begin{lemma}\label{lem:thm-key}Let $p>3$ be a prime and $s$ be a positive
integer. Then
\begin{align*}\label{eq:comiden1}
   \sum_{k=0}^{\frac{p-1}{2}} \left (\frac{(\frac12)_k}{k!}\right )^{2s}\cdot H_{2k}^{(2)}\equiv 0 \mod
   p\,.
\end{align*}
\end{lemma}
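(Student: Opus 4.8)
The plan is to prove Lemma~\ref{lem:thm-key} by a reflection symmetry on the summation index, $k\mapsto\frac{p-1}{2}-k$, combined with Lemma~\ref{lem:H(n+k)+H(n-k)}. First I would reduce the hypergeometric weight to a binomial coefficient modulo $p$. For $0\le k\le\frac{p-1}{2}$ the sum $H_{2k}^{(2)}=\sum_{j=1}^{2k}j^{-2}$ involves only denominators $j\le p-1$, hence is a $p$-adic integer; and by the congruence $(-1)^k\binom{\frac{p-1}{2}}{k}\equiv\frac{(\frac12)_k}{k!}\mod p$ recorded in Subsection~\ref{ss:com} (the case $r=1$), together with the fact that the exponent $2s$ is even,
\[
\left(\frac{(\frac12)_k}{k!}\right)^{2s}\equiv\binom{\frac{p-1}{2}}{k}^{2s}\mod p .
\]
Writing $S$ for the sum in Lemma~\ref{lem:thm-key}, this gives $S\equiv\sum_{k=0}^{(p-1)/2}\binom{\frac{p-1}{2}}{k}^{2s}H_{2k}^{(2)}\mod p$.

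Next I would symmetrize. Replacing $k$ by $\frac{p-1}{2}-k$ leaves $\binom{\frac{p-1}{2}}{k}^{2s}$ unchanged (since $\binom{m}{k}=\binom{m}{m-k}$) and sends $H_{2k}^{(2)}$ to $H_{p-1-2k}^{(2)}$, so also $S\equiv\sum_{k=0}^{(p-1)/2}\binom{\frac{p-1}{2}}{k}^{2s}H_{p-1-2k}^{(2)}\mod p$. Averaging the two expressions,
\[
2S\equiv\sum_{k=0}^{(p-1)/2}\binom{\frac{p-1}{2}}{k}^{2s}\bigl(H_{2k}^{(2)}+H_{p-1-2k}^{(2)}\bigr)\mod p .
\]
For $1\le k\le\frac{p-3}{2}$ the index $2k$ lies between $1$ and $p-2$, so Lemma~\ref{lem:H(n+k)+H(n-k)} (applied with $2k$ in the role of $k$) kills the bracketed factor modulo $p$. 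The two remaining terms $k=0$ and $k=\frac{p-1}{2}$ each equal $H_{p-1}^{(2)}$, and $H_{p-1}^{(2)}\equiv0\mod p$: the substitution $j\mapsto p-j$ shows $\sum_{j=(p+1)/2}^{p-1}j^{-2}\equiv H_{(p-1)/2}^{(2)}\mod p$, whence $H_{p-1}^{(2)}\equiv2H_{(p-1)/2}^{(2)}\equiv0\mod p$ by Lemma~\ref{lem:H_n=0}. Therefore $2S\equiv0\mod p$, and since $p$ is odd we conclude $S\equiv0\mod p$, as claimed.

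I do not anticipate a genuine obstacle here: once the hypergeometric term is replaced by the symmetric binomial coefficient, the whole proof is a reflection-and-averaging trick whose inputs are already in place. The two points requiring a little care are (i) that $H_{2k}^{(2)}$ is $p$-integral over the entire range (it is, because $2k\le p-1$), and (ii) that the boundary terms $k=0,\frac{p-1}{2}$ force one to isolate the fact $H_{p-1}^{(2)}\equiv0\mod p$ — but this follows immediately from Lemma~\ref{lem:H_n=0} (alternatively from Lemma~\ref{lem:H(n+k)+H(n-k)} with $k=1$, which gives $H_{p-2}^{(2)}\equiv-1$ and hence $H_{p-1}^{(2)}\equiv0$). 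Pushing the statement to higher powers of $p$ would be much harder, but modulo $p$ the symmetry does all the work.
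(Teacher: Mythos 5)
Your proposal is correct and follows essentially the same route as the paper: reduce the hypergeometric term to $\binom{(p-1)/2}{k}^{2s}$ modulo $p$, symmetrize via $k\mapsto\frac{p-1}{2}-k$, and invoke Lemma~\ref{lem:H(n+k)+H(n-k)}. Your extra care with the boundary terms $k=0$ and $k=\frac{p-1}{2}$ (where $H_{2k}^{(2)}+H_{p-1-2k}^{(2)}=H_{p-1}^{(2)}$ falls outside the stated range of that lemma) is a detail the paper elides, and your fix via $H_{p-1}^{(2)}\equiv 0 \bmod p$ is correct.
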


\begin{proof}

Using the fact that $\displaystyle
(-1)^k\binom{\frac{p-1}2}{k}\equiv \frac{(\frac12)_k}{k!} \mod p$,
we have
\begin{align*}
   \sum_{k=0}^{\frac{p-1}2} \left (\frac{(\frac12)_k}{k!}\right
   )^{2s}H_{2k}^{(2)}&\equiv \sum_{k=0}^{\frac{p-1}2} \binom{\frac{p-1}2}{k}^{2s}H_{2k}^{(2)} \mod p\\
   &= \frac12 \left (  \sum_{k=0}^{\frac{p-1}2} \binom{\frac{p-1}2}{k}^{2s}H_{2k}^{(2)} + \sum_{k=0}^{\frac{p-1}2}
   \binom{\frac{p-1}2}{\frac{p-1}2-k}^{2s}H_{p-1-2k}^{(2)}\right ) \\&= \frac12 \left  (  \sum_{k=0}^{\frac{p-1}2}
   \binom{\frac{p-1}2}{k}^{2s}(H_{2k}^{(2)}+H_{p-1-2k}^{(2)}) \right ) \\
   &\equiv 0 \mod p.
\end{align*}
\end{proof}
\subsection{An elementary $p$-adic analysis}

Let $F(x_1,\cdots, x_t;z)$ be a $(t+1)$-variable formal power
series. For instance, it could be a  scalar multiple of a
terminating hypergeometric series as  follows:
 $$C\cdot \,_{r+1}F_r\left [
\begin{array}{cccccccc} a_1,&a_2,&\cdots,&a_r&-n;&z\\& b_1,&
\cdots,&b_{r-1},&b_{r}
\end{array} \right ].$$ Assume
that by  specifying values $x_i=a_i,i=1,\cdots, t$ and $z=z_0$,
$$F(a_1,\cdots, a_t;z_0)\in \Z_p.$$ Now we fix $z_0$ and deform the
parameters $a_i$  into polynomials $a_i(x)\in \Z_p[x]$ such that
$a_i(0)=a_i$ for all $1\le i\le t$, and assume that the resulting
function $F(a_1(x),\cdots, a_t(x);z_0)$ is a formal power series in
$x^2$ with coefficients in $\Z_p$, i.e.
$$F(a_1(x),\cdots, a_t(x);z_0)=A_0+A_2x^2+A_4x^4+\cdots, \quad A_i\in \Z_p,$$ where
$A_0=F(a_1,\cdots, a_t;z_0).$
\begin{lemma}\label{lem:congr}
Under the above setting, if  $p^s\mid A_2$ for $s=1,2$, then
$$F(a_1(p),\cdots, a_t(p);z_0)\, \equiv \, A_0\,  \mod p^{2+s}.$$
\end{lemma}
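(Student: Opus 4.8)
The statement to prove is Lemma~\ref{lem:congr}, which is an elementary fact about $p$-adic power series: if $F(a_1(x),\dots,a_t(x);z_0) = A_0 + A_2 x^2 + A_4 x^4 + \cdots$ with all $A_i \in \Z_p$ and $p^s \mid A_2$ for $s=1,2$, then substituting $x=p$ yields $F(a_1(p),\dots,a_t(p);z_0) \equiv A_0 \bmod p^{2+s}$.

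\medskip

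The plan is as follows. First I would substitute $x=p$ directly into the power series expansion, obtaining
$$F(a_1(p),\dots,a_t(p);z_0) = A_0 + A_2 p^2 + A_4 p^4 + A_6 p^6 + \cdots.$$
Here one must first note that this substitution is legitimate: the series $F(a_1(x),\dots,a_t(x);z_0)$ is, by hypothesis, a polynomial (coming from a \emph{terminating} hypergeometric series, which is a finite sum) or at worst a convergent $p$-adic power series with $\Z_p$-coefficients, so evaluation at $x=p \in p\Z_p$ is well-defined and the rearrangement is valid. Then I would estimate the tail term by term. For every $j \ge 2$ we have $A_{2j} \in \Z_p$, so $\ord_p(A_{2j} p^{2j}) \ge 2j \ge 4 \ge 2+s$ whenever $s \le 2$; thus every term $A_{2j}p^{2j}$ with $j\ge 2$ lies in $p^{2+s}\Z_p$. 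The only term requiring the hypothesis is the $j=1$ term: since $p^s \mid A_2$, we get $\ord_p(A_2 p^2) \ge s + 2 = 2+s$, so $A_2 p^2 \in p^{2+s}\Z_p$ as well.

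\medskip

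Combining these, $F(a_1(p),\dots,a_t(p);z_0) - A_0 = A_2 p^2 + \sum_{j\ge 2} A_{2j} p^{2j}$ is a sum of elements of $p^{2+s}\Z_p$, hence lies in $p^{2+s}\Z_p$, which is exactly the claimed congruence $F(a_1(p),\dots,a_t(p);z_0) \equiv A_0 \bmod p^{2+s}$.

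\medskip

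There is essentially no serious obstacle here; the lemma is a bookkeeping device that isolates the arithmetic input (the divisibility $p^s \mid A_2$, which in applications will be supplied by the harmonic-sum congruences of Lemma~\ref{lem:H_n=0} and Lemma~\ref{thm:Moreley2}) from the analytic structure (the power series being even in $x$). The one point deserving a careful word is the justification that $F(a_1(x),\dots,a_t(x);z_0)$ really is a power series \emph{with $\Z_p$-coefficients} in the relevant applications, so that term-by-term $p$-adic valuation estimates apply; but in the intended use $F$ is a scalar multiple of a terminating ${}_{r+1}F_r$, hence a polynomial in $x$, and the hypothesis that its coefficients lie in $\Z_p$ is part of the setup. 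So the proof is just the three lines above.
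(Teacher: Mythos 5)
Your proof is correct; the paper itself omits any proof of Lemma~\ref{lem:congr}, treating it as an elementary observation, and your argument (substitute $x=p$, note $\ord_p(A_{2j}p^{2j})\ge 4\ge 2+s$ for $j\ge 2$ and $\ord_p(A_2p^2)\ge 2+s$ by hypothesis) is exactly the intended one. Your remark about why the substitution and term-by-term estimates are legitimate in the terminating (polynomial) case is a worthwhile addition that the paper leaves implicit.
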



\section{A new proof of \eqref{eq:0}}

Now we briefly outline our method for proving the next few
supercongruences, which is motivated by  the papers \cite{MO08}
   and \cite{Mortenson08}. To
each congruence, we first identify a corresponding hypergeometric
evaluation identity, which with specified parameters is congruent to
a target truncated hypergeometric series evaluation up to some power
of $p$. Usually the power of $p$ hence obtained is weaker than the
conjectural exponent. In our cases, we reduce the optimal
congruences to some congruence combinatorial identities, which are
established using additional hypergeometric evaluation identities or
combinatorics.

Our strategy can be best implemented in the following new proof of
\eqref{eq:0}. An identity of Whipple \cite[(5.1)]{Whipple26} says
 $$_4F_3\left [ \begin{array}{cccccccc} a,&1+a/2,&c,&d;&-1\\&
a/2,& 1+a-c,&1+a-d \end{array} \right
]=\frac{\G(1+a-c)\G(1+a-d)}{\G(1+a)\G(1+a-c-d)}. $$ Letting
$a=\frac12, c=\frac12+\frac{p}{2},d=\frac12-\frac{p}{2}$, we
conclude immediately that
\begin{equation*}
  \sum_{k=0}^{\frac{p-1}{2}} (4k+1) \left (\frac{(\frac12)_k}{k!}\right )^3(-1)^k \equiv \frac{\G(1-\frac{p}{2})
  \G(1+\frac{p}{2})}{\G(\frac12)\G(\frac{3}{2})}=(-1)^{\frac{p-1}2}p \mod
  p^2.
\end{equation*} To achieve the congruence modulo $p^3$, we consider the expansion of the terminating hypergeometric series (it terminates as $\frac{1-p}2$ is
a negative integer): \begin{equation}\label{eq:10}_4F_3 \left [
\begin{array}{cccccccc} \frac{1-p}{2},& \frac{5}4,&\frac{1-x}{2},
&\frac{1+x}{2}; &-1\\&\frac{1}{4},&1+\frac{x}2, &1-\frac{x}2
\end{array} \right ]=\sum_{k=0}^{\frac{p-1}{2}} (4k+1) \left
(\frac{(\frac12)_k}{k!}\right )^3(-1)^k\,+\, A_2x^2+\cdots ,
\end{equation} for some $A_2\in \Z_p$.

By Lemma \ref{lem:congr}, if  $p\mid A_2$, we are done. Now we
follow Mortenson \cite{Mortenson08} to use another hypergeometric
evaluation identity, which is a specialization of Whipple's $_7F_6$
formula (see \cite[pp. 28]{Bailey}).
\begin{eqnarray*}
 && _6F_5 \left [ \begin{array}{cccccccc} a,& 1+\frac{a}2,&b, &c,&d,&e; &-1\\&\frac{a}{2},&1+a-b, &1+a-c,& 1+a-d,&1+a-e \end{array} \right ]
 \\&& =\frac{\G(1+a-d)\G(1+a-e)}{\G(1+a)\G(1+a-d-e)}\, _3F_2 \left [ \begin{array}{cccccccc} 1+a-b-c,&d, &e; &1\\ &1+a-b, &1+a-c\end{array} \right ].
\end{eqnarray*}Letting $a=\frac12, b=\frac{1-x}{2},c=\frac{1+x}{2},e=\frac{1-p}2,d=1$, we have
\begin{eqnarray}\label{eq:6F5}
 _6F_5 \left [ \begin{array}{cccccccc} \frac12,& \frac{5}4,&\frac{1-x}{2}, &\frac{1+x}{2},&\frac{1-p}{2},&1; &-1
 \\&\frac{1}{4},&1+\frac{x}2, &1-\frac{x}2,& \frac12,&1+\frac{p}2 \end{array} \right ]
  =\frac{\G(\frac12)\G(1+\frac{p}{2})}{\G(\frac32)\G(\frac{p}2)}\, _3F_2 \left [ \begin{array}{cccccccc} \frac12,&1,  &\frac12-\frac{p}{2};
  &1\\ &1+\frac{x}2, &1-\frac{x}2\end{array} \right ].
\end{eqnarray} Since
$\displaystyle
\frac{\G(\frac12)\G(1+\frac{p}{2})}{\G(\frac32)\G(\frac{p}2)}=p,$
every $x$-coefficient  of the above is in $p\Z_p$. Moreover,
modulo $p$ the left hand side of \eqref{eq:10} is congruent to that of \eqref{eq:6F5}. So when we expand the left hand side of
\eqref{eq:10} in terms of $x$, the  coefficients are all in $p\Z_p$.
In particular, $p\mid A_2$ and this concludes the
proof of \eqref{eq:0}.

\section{Proofs of Theorems \ref{thm:1}, \ref{thm:2}, \ref{thm:3}, and \ref{thm:4}}
We first recall the following identity of Whipple
\cite[(7.7)]{Whipple26}:
\begin{eqnarray}\label{eq:W7.7}
\begin{split}
  _7F_6 \left [ \begin{array}{cccccccc} a,& 1+\frac12 a,&c,&d,&e,&f,&g;& 1 \\ & \frac{1}{2}a, & 1+a-c,& 1+a-d, & 1+a-e,& 1+a-f& 1+a-g; &
  \end{array} \right ]\\=\frac{\G(1+a-e)\G(1+a-f)\G(1+a-g)\G(1+a-e-f-g)}{\G(1+a)\G(1+a-f-g)\G(1+a-e-f)\G(1+a-e-f)}\times
  \\  _4F_3 \left [ \begin{array}{cccccccc} 1+a-c-d,& e,&f,&g; &1\\&e+f+g-a, &1+a-c,& 1+a-d \end{array} \right ],
\end{split}
  \end{eqnarray}subject to the condition that the $_4F_3$ is a terminating series.

\subsection{Proof of Theorem \ref{thm:1}}

Let  $r$ be a positive integer and $p>3$ a prime. In the identity
\eqref{eq:W7.7}, we let $$a=\frac12, c=\frac12+i\frac{p^r}{2},
d=\frac12-i\frac{p^r}{2}, e=\frac12+\frac{p^r}{2},
f=\frac12-\frac{p^r}{2}, g=1,$$ where $i=\sqrt{-1}$ and thereafter, then following McCarthy and Osburn's
argument we know the left hand side of \eqref{eq:W7.7} is congruence to
$$\sum_{k=0}^{\frac{p^r-1}{2}}  (4k+1) \left (\frac{(\frac12)_k}{k!}\right
)^4 \mod p^{4r}$$ and the right hand side of \eqref{eq:W7.7} equals
$$\frac{\G(1-\frac{p^r}{2})\G(1+\frac{p^r}{2})\G(-\frac12)}{\G(\frac{3}{2})\G(-\frac{p^r}{2})\G(\frac{p^r}{2})}\, _4F_3\left [ \begin{array}{cccccccc} \frac12,&\frac12+\frac{p^r}{2},&\frac12-\frac{p^r}{2},&1;&1\\&
\frac{3}{2},& 1-i\frac{p^r}{2},&1+i\frac{p^r}{2} \end{array} \right
]. $$ Since
$$\frac{\G(1-\frac{p^r}{2})\G(1+\frac{p^r}{2})\G(-\frac12)}{\G(\frac{3}{2})\G(-\frac{p^r}{2})\G(\frac{p^r}{2})}=p^{2r},$$
it suffices to prove
\begin{equation*}\label{eq:5'}
  p^r\cdot  \sum_{k= 0}^{\frac{p^r-1}{2}}   \frac{1}{2k+1} \left (\frac{(\frac12)_k}{k!}\right )^2 {\equiv} \ 1 \mod
  p^3\,,  \text{ for }p>3.
\end{equation*}

 Recall that Lemma \ref{lem:comiden0} says
for any odd integer $n>1$,
  \begin{equation*}
  (2n+1) \sum_{k=0}^{n}   \frac{(-1)^k}{2k+1} \binom{n}{k}\binom{n+k}{k} \,{=}\, 1.
\end{equation*}Therefore, combining this identity, congruence \eqref{eq:CLW}, and Lemma \ref{thm:Moreley2}, we have
\begin{eqnarray*}\label{eq:5'}
  p^r\cdot  \sum_{k= 0}^{\frac{p^r-1}{2}}   \frac{1}{2k+1} \left (\frac{(\frac12)_k}{k!}\right )^2
  &=&p^r\cdot  \sum_{k= 0}^{\frac{p^r-1}{2}-1}   \frac{1}{2k+1} \left (\frac{(\frac12)_k}{k!}\right
  )^2+\left (\frac{(\frac12)_{\frac{p^r-1}{2}}}{{\frac{p^r-1}{2}}!}\right
  )^2\\
  &\equiv& p^r\cdot \sum_{k=0}^{\frac{p^r-1}{2}-1}   \frac{(-1)^k}{2k+1} \binom{\frac{p^r-1}{2}}{k}\binom{\frac{p^r-1}{2}+k}{k} +(-1)^{\frac{p^r-1}2}\binom{p^r-1}{\frac{p^r-1}2} \mod p^3\\
  &\equiv &1 \mod p^3\,.
\end{eqnarray*}
This concludes our proof of Theorem \ref{thm:1}.

\subsection{Proof of Theorem \ref{thm:2}} In the formula
\eqref{eq:W7.7}, take $$a=\frac12,c=\frac12+i\frac{p}{2},
d=\frac12-i\frac{p}{2}, e=\frac12-\frac{p}{2},
f=\frac12+\frac{p}{2}, g=\frac12-p^4,$$  then the left hand side of
\eqref{eq:W7.7} is congruent to
$$\sum_{k=0}^{\frac{p-1}2}  (4k+1) \left (\frac{(\frac12)_k}{k!}\right
)^6 \mod p^4.$$ Meanwhile, the right hand side of \eqref{eq:W7.7} is
congruent to
\begin{eqnarray*}
\frac{\G(1-\frac{p}{2})\G(1+\frac{p}{2})}{\G(\frac12)\G(\frac{3}{2})}\frac{\G(1+p^4)\G(p^4)}{\G(\frac12+\frac{p}{2}+p^4)\G(\frac12-\frac{p}{2}+p^4)}
 \sum_{k=0}^{\frac{p-1}{2}}
   \frac{(\frac12)_k^2(\frac12+\frac{p}{2})_k(\frac12-\frac{p}{2})_k}{k!^2(1-i\frac{p}{2})_k(1+i\frac{p}{2})_k} \mod p^4,
\end{eqnarray*} where
 $$\frac{\G(1-\frac{p}{2})\G(1+\frac{p}{2})}{\G(\frac12)\G(\frac{3}{2})}=(-1)^{\frac{p-1}{2}} p\,,$$ and
\begin{eqnarray*}\frac{\G(1+p^4)\G(p^4)}{\G(\frac12+\frac{p}{2}+p^4)\G(\frac12-\frac{p}{2}+p^4)}&=&
\frac{(p^4-\frac{p-1}2)_{\frac{p-1}2}}{(1+p^4)_{\frac{p-1}2}}\\ &\equiv&\frac{(-\frac{p-1}2)(-\frac{p-1}2+1)\cdots (-1)}{1\cdot 2\cdots (\frac{p-1}2)} \mod p=(-1)^{\frac{p-1}{2}}.\end{eqnarray*}

Therefore, Theorem \ref{thm:2} follows from the  result of
 Kilbourn (see \eqref{eq:Kilbourn}) and the next Lemma.
\begin{lemma}\label{lem: thm1 mod p^3}Let $p>3$ be a prime, then
  \begin{eqnarray*}\label{eq:conj2}
   \sum_{k=0}^{\frac{p-1}{2}}
   \frac{(\frac12)_k^2(\frac12+\frac{p}{2})_k(\frac12-\frac{p}{2})_k}{k!^2(1-i\frac{p}{2})_k(1+i\frac{p}{2})_k}\equiv \sum_{k=0}^{\frac{p-1}{2}}
   \left (\frac{(\frac12)_k}{k!}\right )^4 \mod p^3.
 \end{eqnarray*}
\end{lemma}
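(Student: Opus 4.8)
The plan is to compare the two sides summand by summand, using the expansions recorded in Section~\ref{ss:com} and then invoking Lemma~\ref{lem:thm-key}.

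First I would fix $k$ with $0\le k\le\frac{p-1}{2}$ and expand the $k$-th summand on the left. Applying \eqref{eq:CLW} with $\eps=\frac{p}{2}$ gives
$$\left(\frac12+\frac{p}{2}\right)_k\left(\frac12-\frac{p}{2}\right)_k=\left(\frac12\right)_k^{2}\left(1-p^{2}\sum_{j=1}^{k}\frac{1}{(2j-1)^{2}}+O(p^{4})\right),$$
and, since $(1-i\frac{p}{2})_k(1+i\frac{p}{2})_k=\prod_{j=1}^{k}\left(j^{2}+\frac{p^{2}}{4}\right)\in\Zp$ (so the presence of $i=\sqrt{-1}$ causes no $p$-adic difficulty), \eqref{eq:CLW2} with $\eps=\frac{ip}{2}$, so that $\eps^{2}=-\frac{p^{2}}{4}$, gives
$$\left(1-\frac{ip}{2}\right)_k\left(1+\frac{ip}{2}\right)_k=(k!)^{2}\left(1+\frac{p^{2}}{4}H_k^{(2)}+O(p^{4})\right).$$
In both identities the error is a polynomial in $p^{2}$ with no constant term whose coefficients are symmetric functions of the numbers $\frac{1}{(2j-1)^{2}}$ (respectively $\frac{1}{j^{2}}$) with $j\le k\le\frac{p-1}{2}$, hence lie in $\Zp$, so the errors genuinely belong to $p^{4}\Zp$. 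Dividing, inverting the denominator factor modulo $p^{4}$ (legitimate since it is a unit), and using $\frac{(\frac12)_k}{k!}=2^{-2k}\binom{2k}{k}\in\Zp$, the $k$-th summand on the left equals
$$\left(\frac{(\frac12)_k}{k!}\right)^{4}\left(1-p^{2}\left(\sum_{j=1}^{k}\frac{1}{(2j-1)^{2}}+\frac14 H_k^{(2)}\right)+O(p^{4})\right).$$

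Next I would use the elementary identity $\sum_{j=1}^{k}\frac{1}{(2j-1)^{2}}+\frac14 H_k^{(2)}=\sum_{j=1}^{k}\frac{1}{(2j-1)^{2}}+\sum_{j=1}^{k}\frac{1}{(2j)^{2}}=H_{2k}^{(2)}$, so that the $k$-th summand on the left is $\left(\frac{(\frac12)_k}{k!}\right)^{4}\left(1-p^{2}H_{2k}^{(2)}+O(p^{4})\right)$. Summing over $0\le k\le\frac{p-1}{2}$ and collecting the finitely many $O(p^{4})$ contributions into a single element of $p^{4}\Zp$ then yields
$$\sum_{k=0}^{\frac{p-1}{2}}\frac{(\frac12)_k^{2}(\frac12+\frac{p}{2})_k(\frac12-\frac{p}{2})_k}{k!^{2}(1-i\frac{p}{2})_k(1+i\frac{p}{2})_k}\equiv\sum_{k=0}^{\frac{p-1}{2}}\left(\frac{(\frac12)_k}{k!}\right)^{4}-p^{2}\sum_{k=0}^{\frac{p-1}{2}}\left(\frac{(\frac12)_k}{k!}\right)^{4}H_{2k}^{(2)}\mod p^{4}.$$
Finally, Lemma~\ref{lem:thm-key} with $s=2$ shows the last sum lies in $p\Zp$, so $p^{2}$ times it lies in $p^{3}\Zp$, and the claimed congruence modulo $p^{3}$ follows.

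The main obstacle is the $p$-adic bookkeeping in the first step: one must be sure that after clearing the denominator the remainder is of size $p^{4}$, not merely $p^{2}$, so that it still vanishes modulo $p^{3}$ after summing $\frac{p+1}{2}$ terms. This comes down to the $p$-integrality of the harmonic-type sums appearing in the $O(\eps^{4})$ tails, which holds because every index that occurs is smaller than $p$. Apart from that the argument is a short computation, and it uses no new hypergeometric identity beyond what has already been invoked — only the combinatorial input \eqref{eq:CLW}, \eqref{eq:CLW2} and the already-proved Lemma~\ref{lem:thm-key}.
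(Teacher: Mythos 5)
Your proposal is correct and follows essentially the same route as the paper: both identify, via \eqref{eq:CLW} and \eqref{eq:CLW2}, the coefficient of $x^2$ in the $k$-th summand as $-\left(\frac{(\frac12)_k}{k!}\right)^4 H_{2k}^{(2)}$ and then invoke Lemma~\ref{lem:thm-key} with $s=2$. The only (cosmetic) difference is that you substitute $x=p$ and track the error terms by hand, where the paper packages that bookkeeping into Lemma~\ref{lem:congr}.
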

\begin{proof}
Expand
\begin{eqnarray*}
 \sum_{k=0}^{\frac{p-1}{2}}
   \frac{(\frac12)_k^2(\frac12+\frac{x}{2})_k(\frac12-\frac{x}{2})_k}{k!^2(1-xi/2)_k(1+xi/2)_k}= \sum_{k=0}^{\frac{p-1}{2}}
   \left (\frac{(\frac12)_k}{k!}\right )^4(1 +b_{2,k}
   x^2+b_{4,k}x^4+\cdots).
   \end{eqnarray*} Using  \eqref{eq:CLW} and \eqref{eq:CLW2}, we have $$b_{2,k}=-
\sum_{j=1}^k\frac1{(2j-1)^2}- \frac14 \sum_{j=1}^k\frac1{j^2}=-
\sum_{j=1}^{2k}\frac1{j^2}.$$

The claim of the Lemma is valid by using  Lemma \ref{lem:congr} and taking $s=2$ in
Lemma \ref{lem:thm-key}.  
\end{proof}

\subsection{The proof of Theorem \ref{thm:3}} We start with the
following combinatorial identity.
\begin{lemma}\label{lem:10}
  \begin{eqnarray*}
  \sum_{k=0}^{\frac{p-1}{2}} (6k+1)\frac{(\frac12)_k(\frac12-\frac{p}{2})_k(\frac12+\frac{p}{2})_k}
  {(1)_k(1+\frac{p}{4})_k(1-\frac{p}{4})_k}\frac{1}{4^k}=(-1)^{\frac{p-1}{2}}p \,.
\end{eqnarray*}
\end{lemma}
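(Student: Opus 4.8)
The plan is to recognize the left-hand side as a terminating well-poised hypergeometric series at argument $\tfrac{1}{4}$ and to evaluate it by a strange hypergeometric summation of Gosper (cf.\ the abstract; equivalently by one of the cubic hypergeometric evaluations of Gessel and Stanton \cite{Gessel-Stanton82}). First I would set $p=2n+1$, so that $\tfrac{1}{2}-\tfrac{p}{2}=-n$, $\tfrac{1}{2}+\tfrac{p}{2}=n+1$, and $1\pm\tfrac{p}{4}=\tfrac{2n+5}{4},\,\tfrac{3-2n}{4}$. Then the factor $(\tfrac{1}{2}-\tfrac{p}{2})_k=(-n)_k$ kills every term with $k>n$, so the displayed sum over $0\le k\le\tfrac{p-1}{2}$ is the complete terminating series; writing $6k+1=(\tfrac{7}{6})_k/(\tfrac{1}{6})_k$ puts it in the form
$$\sum_{k=0}^{\frac{p-1}{2}}(6k+1)\frac{(\tfrac{1}{2})_k(\tfrac{1}{2}-\tfrac{p}{2})_k(\tfrac{1}{2}+\tfrac{p}{2})_k}{(1)_k(1+\tfrac{p}{4})_k(1-\tfrac{p}{4})_k}\frac{1}{4^k}\;=\;{}_4F_3\!\left[\begin{array}{cccc}\tfrac{1}{2},&\tfrac{7}{6},&-n,&n+1\\&\tfrac{1}{6},&\tfrac{2n+5}{4},&\tfrac{3-2n}{4}\end{array};\ \tfrac{1}{4}\right],$$
a \emph{cubic} well-poised series with leading parameter $a=\tfrac{1}{2}$ — the well-poised pair being $1+\tfrac{a}{3}=\tfrac{7}{6}$ over $\tfrac{a}{3}=\tfrac{1}{6}$ — which is exactly the shape to which Gosper's strange evaluation applies.

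Next I would feed the values $a=\tfrac{1}{2}$ together with the two free parameters $\tfrac{1}{2}\pm\tfrac{p}{2}$ into Gosper's formula; its right-hand side is a quotient of Gamma functions, and I expect it to collapse to $\dfrac{\G(\tfrac{1}{2}-n)\G(\tfrac{3}{2}+n)}{\G(\tfrac{1}{2})\G(\tfrac{3}{2})}$ — the very quotient that already appeared in the ${}_4F_3$-evaluation behind \eqref{eq:0}. That quotient simplifies by exactly the duplication-and-reflection computation performed in the proof of Lemma~\ref{lem:comiden1}: $\G(\tfrac{3}{2}+n)=(n+\tfrac{1}{2})\G(\tfrac{1}{2}+n)$ and $\G(\tfrac{1}{2}-n)\G(\tfrac{1}{2}+n)=\pi/\sin\!\big(\pi(\tfrac{1}{2}+n)\big)=(-1)^n\pi$, so that dividing by $\G(\tfrac{1}{2})\G(\tfrac{3}{2})=\tfrac{\pi}{2}$ gives the value $(-1)^n(2n+1)=(-1)^{\frac{p-1}{2}}p$, which is the assertion.

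The hard part will be the parameter bookkeeping: pinning down the precise specialization of Gosper's evaluation and verifying that it is legitimate here. This should be unproblematic, because after the substitution $p=2n+1$ the series is terminating — so the identity is a polynomial identity in the remaining parameters, with no convergence or analytic-continuation issue — and the lower parameters $1\pm\tfrac{p}{4}$ never produce a vanishing denominator for $1\le k\le n$ (indeed $\tfrac{p}{4}\notin\Z$ for $p$ an odd prime). As an independent check, being a terminating hypergeometric identity it also yields to creative telescoping: with $F(n,k)$ denoting the $k$-th summand (and $p=2n+1$), one exhibits a hypergeometric certificate $G(n,k)$ with $(2n+1)F(n+1,k)+(2n+3)F(n,k)=G(n,k+1)-G(n,k)$; summing over $k$ gives $(2n+1)S(n+1)=-(2n+3)S(n)$ for the sum $S(n)$, and together with $S(0)=1$ this telescopes to $S(n)=(-1)^n(2n+1)$.
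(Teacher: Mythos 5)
There is a genuine gap, and it sits precisely at the step you yourself flag as ``the hard part'': the parameter bookkeeping for Gosper's strange evaluation does not work out. In the form quoted in the paper (Gessel--Stanton \cite[(1.2)]{Gessel-Stanton82}), Gosper's identity has upper parameters $2a,\,2b,\,1-2b,\,1+\tfrac{2a}{3},\,-n$ and lower parameters $a+b-1,\,a+b+\tfrac12,\,\tfrac{2a}{3},\,1+2a+2n$. With $2a=\tfrac12$ forced by the pair $\tfrac76/\tfrac16$, the two $b$-dependent lower parameters differ by $\tfrac32$, while the lower parameters of the sum in Lemma~\ref{lem:10} are $1+\tfrac{p}{4}$ and $1-\tfrac{p}{4}$, which differ by $\tfrac{p}{2}$; and the remaining lower parameter $1+2a+2n=p+\tfrac12$ is not among $1\pm\tfrac p4$ either. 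So no choice of $b$ specializes Gosper's identity to this series. (Gosper's evaluation is indeed used in the paper, but for the \emph{next} lemma, the $x$-deformation $\tfrac12\pm\tfrac x2$, $1\mp\tfrac x4$, where the pattern does fit.) The identity that actually evaluates Lemma~\ref{lem:10} is a different cubic evaluation, namely (31.1) of Gessel \cite{Gessel95}, specialized at $a=c=\tfrac12+\tfrac p4$, $n=\tfrac{p-1}{2}$. Relatedly, your anticipated right-hand side $\Gamma(\tfrac12-n)\Gamma(\tfrac32+n)/\bigl(\Gamma(\tfrac12)\Gamma(\tfrac32)\bigr)$ is not what the correct identity produces: one gets the Pochhammer quotient $\frac{(\frac{3}{2}-\frac{p}{4})_{(p-1)/2}(1-\frac{p}{2})_{(p-1)/2}}{(2-\frac{p}{2})_{(p-1)/2}(1-\frac{p}{4})_{(p-1)/2}}$, and reducing \emph{that} to $(-1)^{(p-1)/2}p$ is a genuine extra step (Lemma~\ref{lem:comiden1}), carried out via $\Gamma(x)\Gamma(1-x)=\pi/\sin(\pi x)$ at $x=\tfrac p4$, i.e.\ through $p\cot(\pi p/4)$ --- not through the $\sin\bigl(\pi(\tfrac12+n)\bigr)$ computation you sketch. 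Your guess lands on the right number, but only because you reverse-engineered it from the claim.

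Your fallback via creative telescoping is plausible in principle (the summand is a proper hypergeometric term in $(n,k)$ once $p=2n+1$, and Gessel's identity was itself found by the WZ method), and the recurrence $(2n+1)S(n+1)=-(2n+3)S(n)$ with $S(0)=1$ would indeed yield $S(n)=(-1)^n(2n+1)$. But without exhibiting the certificate $G(n,k)$ this is a proof plan, not a proof; as written, neither route in the proposal closes.
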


\begin{proof}

Recall that (31.1) of Gessel \cite{Gessel95} says
\begin{eqnarray*}
 _5F_4 \left [ \begin{array}{cccccccc} \frac12+a-c,& -n,&n+1,&2-2c+n,&\frac{5}{3}-\frac{2c}{3}+\frac{n}{3}; &\frac14\\&2-c+n,&\frac{2}{3}-\frac{2c}{3}+\frac{n}{3}, &n-2a+2,& \frac{3}{2}-c \end{array} \right ]
  =\frac{(2-c)_n(2-2a)_n}{(3-2c)_n(\frac{3}{2}-a)_n}.
\end{eqnarray*}

Letting
$a=\frac12+\frac{p}{4},c=\frac12+\frac{p}{4},n=\frac{p-1}{2}$ and
using  Lemma \ref{lem:comiden1}, we have
\begin{eqnarray}\label{eq:thm3-1}
 _5F_4 \left [ \begin{array}{cccccccc} \frac12,& \frac12,&\frac76&,\frac12-\frac{p}{2},&\frac12+\frac{p}{2}; &\frac14\\&\frac12,
 &\frac16 &1-\frac{p}{4},& 1+\frac{p}{4} \end{array} \right
  ]=\frac{(\frac{3}{2}-\frac{p}{4})_{\frac{p-1}{2}}(1-\frac{p}{2})_{\frac{p-1}{2}}}{(2-\frac{p}{2})_{\frac{p-1}{2}}(1-\frac{p}{4})_{\frac{p-1}{2}}}=(-1)^{\frac{p-1}2}p.
\end{eqnarray}
\end{proof}

\begin{lemma}
  The function \begin{eqnarray*}
  \left (\sum_{k=0}^{\frac{p-1}{2}} (6k+1)\frac{(\frac12)_k(\frac12-\frac{x}{2})_k(\frac12+\frac{x}{2})_k}
  {(1)_k(1+\frac{x}{4})_k(1-\frac{x}{4})_k}\frac{1}{4^k}\right )\big/ \left (\sum_{k=0}^{\frac{p-1}{2}}
  \frac{6k+1}{4^k} \left (\frac{(\frac12)_k}{k!} \right
  )^3 \right )
\end{eqnarray*} is a formal power series in $x^2$ with coefficients in $\Z_p$. Its $x^2$ coefficient is zero modulo $p$.

\end{lemma}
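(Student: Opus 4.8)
The plan separates an easy structural part from a genuinely delicate congruence.

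\emph{Structural part.} Write $f(x)=\sum_{k=0}^{(p-1)/2}c_k(x)$ with
$$c_k(x)=(6k+1)\,4^{-k}\,\frac{(\tfrac12)_k\,(\tfrac12-\tfrac x2)_k\,(\tfrac12+\tfrac x2)_k}{(1)_k\,(1+\tfrac x4)_k\,(1-\tfrac x4)_k},$$
so the denominator of the displayed quotient is $g:=f(0)$ and the quotient is $F(x)=f(x)/g$. Since $x\mapsto-x$ interchanges $(\tfrac12-\tfrac x2)_k\leftrightarrow(\tfrac12+\tfrac x2)_k$ and $(1+\tfrac x4)_k\leftrightarrow(1-\tfrac x4)_k$, it fixes each $c_k(x)$, so $f(x)$—and hence $F(x)$—is a power series in $x^2$. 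Expanding $c_k(x)=c_k(0)(1+b_{2,k}x^2+b_{4,k}x^4+\cdots)$ by means of \eqref{eq:1/2}--\eqref{eq:CLW2}, each $b_{2i,k}$ is a polynomial over $\Z$ in the power sums $\sum_{j\le k}(2j-1)^{-\ell}$ and $\sum_{j\le k}j^{-\ell}$; these lie in $\Z_p$ because $k\le\tfrac{p-1}2<p$, and $c_k(0)=(6k+1)4^{-k}\bigl((\tfrac12)_k/k!\bigr)^3\in\Z_p$ ($4$ being a $p$-adic unit), so $f(x)\in\Z_p[[x^2]]$. Explicitly, \eqref{eq:CLW}--\eqref{eq:CLW2} yield $b_{2,k}=-\bigl(\sum_{j=1}^k(2j-1)^{-2}-\tfrac1{16}H_k^{(2)}\bigr)=-\bigl(H_{2k}^{(2)}-\tfrac5{16}H_k^{(2)}\bigr)$, whence $[x^2]f=-\sum_k c_k(0)\bigl(H_{2k}^{(2)}-\tfrac5{16}H_k^{(2)}\bigr)$.

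\emph{Reduction.} Everything now reduces to the two congruences
$$[x^{2i}]f\equiv0\pmod p\ \ (i\ge1),\qquad [x^2]f\equiv0\pmod{p^2}.$$
Indeed Lemma \ref{lem:10} gives $f(p)=(-1)^{(p-1)/2}p$, and as all $[x^{2i}]f\in\Z_p$ we have $f(p)-f(0)=\sum_{i\ge1}([x^{2i}]f)p^{2i}\in p^2\Z_p$; hence $g=f(0)\equiv(-1)^{(p-1)/2}p\pmod{p^2}$, so $v_p(g)=1$. Consequently $[x^{2i}]F=([x^{2i}]f)/g$ has $v_p\ge0$ for all $i$—so $F(x)\in\Z_p[[x^2]]$—and $[x^2]F=([x^2]f)/g$ has $v_p\ge2-1=1$, which is the assertion of the lemma. (The two congruences also give $f(p)\equiv f(0)\pmod{p^4}$ outright, i.e.\ Theorem \ref{thm:3}; passing to $F$ and applying Lemma \ref{lem:congr} with $s=1$ merely repackages this.)

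\emph{The hard part.} For the first congruence I would copy the companion-series device from the new proof of \eqref{eq:0}: adjoin an auxiliary numerator parameter equal to $\tfrac12-p$ to the $_5F_4$ evaluation of Gessel used in Lemma \ref{lem:10} (or to Whipple's $_7F_6$ identity \eqref{eq:W7.7}) so as to obtain a terminating higher hypergeometric series $\Phi(x)$ that is (a) congruent to $f(x)$ coefficientwise modulo $p$—because $(\tfrac12-p)_k\equiv(\tfrac12)_k\bmod p$, so the extra parameter is invisible mod $p$—and (b) equal in closed form to a ratio of Gamma functions times a lower hypergeometric series, the Gamma ratio being divisible by $p$ (just as $\tfrac{\G(\tfrac12)\G(1+p/2)}{\G(\tfrac32)\G(p/2)}=p$ enters the proof of \eqref{eq:0}); then $\Phi(x)\in p\Z_p[[x]]$, forcing $f(x)\in p\Z_p[[x]]$, so in particular $p\mid[x^{2i}]f$ for $i\ge1$. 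For the sharper $[x^2]f\equiv0\pmod{p^2}$ one descends one order: using $\tfrac12-p^2$ in place of $\tfrac12-p$ makes $\Phi(x)\equiv f(x)\bmod p^2$, while its closed form is still $p$ times a lower hypergeometric series $\Psi(x)\in\Z_p[[x]]$, so it remains to show $[x^2]\Psi\equiv0\bmod p$—a congruence of the type treated in Lemma \ref{lem:thm-key}, using the harmonic-sum vanishing of Lemma \ref{lem:H_n=0} and the reflection of Lemma \ref{lem:H(n+k)+H(n-k)}—with Lemma \ref{lem:comiden1} supplying the exact value of the Gamma prefactor. The main obstacle is precisely this: pinpointing the correct companion identity—the $_5F_4$-analogue of the $_6F_5\!\to{}_3F_2$ reduction of Whipple used for \eqref{eq:0}—and checking both that its value at $x=0$ is $g$ to the precision required and that its Gamma prefactor supplies the stated power of $p$; after that, only the expansion-and-valuation bookkeeping of the first two paragraphs remains, carried one order of $p$ deeper than in the proof of \eqref{eq:0}.
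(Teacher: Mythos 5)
Your structural paragraph and your reduction of the lemma to the two congruences $p\mid [x^{2i}]f$ (all $i\ge 1$) and $p^2\mid [x^2]f$ are correct, and they match the logic the paper's proof implicitly follows (all coefficients of $f$ lie in $p\Z_p$, $v_p(f(0))=1$ via Lemma \ref{lem:10}, and the $x^2$-claim amounts to $[x^2]f\in p^2\Z_p$). The gap is that you prove neither congruence: the ``hard part'' is only a plan, and as described it would not go through. One cannot simply adjoin an auxiliary numerator parameter $\tfrac12-p$ (or $\tfrac12-p^2$) to Gessel's $_5F_4$ evaluation or to Whipple's $_7F_6$ and retain a closed-form value; evaluation identities are rigid, and inserting a new parameter pair in general destroys the evaluation. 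Even granting such a companion series, you would still be left with the weighted harmonic-sum congruence
$$\sum_{k=0}^{\frac{p-1}{2}}(6k+1)\,4^{-k}\Bigl(\tfrac{(\frac12)_k}{k!}\Bigr)^3\Bigl(H_{2k}^{(2)}-\tfrac{5}{16}H_k^{(2)}\Bigr)\equiv 0 \pmod {p^2},$$
which is exactly the hard content of the lemma (compare \eqref{eq:comconj2}, the analogous mod-$p$ statement for $z=-\tfrac18$, which the paper leaves unproved). Your fallback citation of Lemma \ref{lem:thm-key} does not cover it: the symmetrization $k\mapsto\tfrac{p-1}2-k$ used there needs an even power of $\binom{(p-1)/2}{k}$ and no weight $(6k+1)4^{-k}$.

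What closes the gap in the paper is a specific identity you did not locate: Gosper's ``strange'' $_5F_4$ evaluation \cite[(1.2)]{Gessel-Stanton82}, specialized at $a=\tfrac14$, $b=\tfrac14-\tfrac{x}{4}$, $n=\tfrac{p-1}2$. This yields a terminating $_5F_4$ congruent to $f(x)$ coefficientwise mod $p$ whose closed form is $\frac{(\frac34)_{(p-1)/2}(\frac54)_{(p-1)/2}}{(1-\frac x4)_{(p-1)/2}(1+\frac x4)_{(p-1)/2}}$. Its numerator is exactly divisible by $p$, which gives $p\mid[x^{2i}]f$ for all $i$; and dividing by the value at $x=0$ leaves $\frac{(1)_{(p-1)/2}^2}{(1-\frac x4)_{(p-1)/2}(1+\frac x4)_{(p-1)/2}}$, whose $x^2$-coefficient is $\tfrac1{16}H_{(p-1)/2}^{(2)}\in p\Z_p$ by \eqref{eq:CLW2} and Lemma \ref{lem:H_n=0}; the paper then transfers this to the quotient \eqref{eq:16}. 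The crucial feature is that the deformation variable $x$ sits entirely in the denominator of the closed form, so the weighted sum above collapses to the single quantity $H_{(p-1)/2}^{(2)}$. Without identifying this (or an equivalent) evaluation, your argument establishes only the easy half of the lemma.
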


\begin{proof}
We use the following \emph{strange} valuation of Gosper  (cf.
\cite[(1.2]{Gessel-Stanton82})
\begin{eqnarray*}
  _5F_4 \left [ \begin{array}{cccccccc} 2a,& 2b,&1-2b,&1+\frac{2a}{3},&-n; &\frac14\\&a+b-1,&a+b+\frac12, &\frac{2a}{3},& 1+2a+2n \end{array} \right ]
  =\frac{(a+\frac12)_n(a+1)_n}{(a+b+\frac12)_n(a-b+1)_n}.
\end{eqnarray*}Letting
$a=\frac14,b=\frac14-\frac{x}{4},n=\frac{p-1}{2}$,
 the left hand side of the above equals
\begin{eqnarray}\label{cor:thm3}
  _5F_4 \left [ \begin{array}{cccccccc} \frac12,& \frac12-\frac{x}2,& \frac12+\frac{x}2,&\frac76,&\frac12-\frac{p}{2}; &\frac14
 \\&\frac12+p,&\frac16 &1-\frac{x}{4},& 1+\frac{x}{4}
 \end{array} \right
  ] =\frac{(\frac{3}{4})_{\frac{p-1}{2}}(\frac54)_{\frac{p-1}{2}}}{(1-\frac x4)_{\frac{p-1}{2}}(1+\frac x4)_{\frac{p-1}{2}}}.
\end{eqnarray}
We remark that \begin{equation}\label{eq:thm3-cong}_5F_4 \left [
\begin{array}{cccccccc} \frac12,& \frac12-\frac{x}2,&
\frac12+\frac{x}2,&\frac76,&\frac12-\frac{p}{2}; &\frac14
 \\&\frac12+p,&\frac16 &1-\frac{x}{4},& 1+\frac{x}{4}
 \end{array} \right
  ]\equiv \displaystyle \sum_{k=0}^{\frac{p-1}{2}} \frac{6k+1}{4^k}\frac{(\frac12)_k(\frac12-\frac{x}{2})_k(\frac12+\frac{x}{2})_k}
  {(1)_k(1+\frac{x}{4})_k(1-\frac{x}{4})_k} \mod p.
\end{equation}
When $x=0$, the right hand side of \eqref{cor:thm3} equals
$\frac{(\frac{3}{4})_{\frac{p-1}{2}}(\frac54)_{\frac{p-1}{2}}}{(1)_{\frac{p-1}{2}}^2}
$, which is in $p\Z_p$. As a matter of fact, if $p\equiv 1 \mod 4$
then $\frac54+\frac{p-1}{4}-1=\frac{p}4$; and if $p\equiv 3 \mod 4$
then $\frac34+\frac{p-3}4=\frac p4$, while $(1)_{\frac{p-1}{2}}$ is
a $p$-adic unit.  It is not difficult to see that $p$ divides
$\frac{(\frac{3}{4})_{\frac{p-1}{2}}(\frac54)_{\frac{p-1}{2}}}{(1)_{\frac{p-1}{2}}^2}
$ exactly. Consequently, if we expand $ _5F_4 \left [
\begin{array}{cccccccc} \frac12,& \frac12-\frac{x}2,&
\frac12+\frac{x}2,&\frac76,&\frac12-\frac{p}{2}; &\frac14
 \\&\frac12+p,&\frac16 &1-\frac{x}{4},& 1+\frac{x}{4}
 \end{array} \right
  ]$ in terms of formal power series of $x$ (in fact, $x^2$), each coefficient is in $p\Z_p$. Thus
the coefficients of the right hand side of \eqref{eq:thm3-cong},
including the coefficient of $x^2$, are all divisible by $p$. By
Lemmas \ref{lem:congr} and \ref{lem:10},
\begin{equation}\label{eq:16a}
    \sum_{k=0}^{\frac{p-1}{2}}
  \frac{6k+1}{4^k} \left (\frac{(\frac12)_k}{k!}\right )^3 \equiv (-1)^{\frac{p-1}2}p \mod p^3.
  \end{equation}Namely, $$\sum_{k=0}^{\frac{p-1}{2}}
  \frac{6k+1}{4^k} \left (\frac{(\frac12)_k}{k!} \right
  )^3=(-1)^{\frac{p-1}2}p+ap^3,$$ for some $a\in \Z_p$. The statement of Theorem
  \ref{thm:3} is equivalent to $a \in p\Z_p$.

The quotient \begin{equation}\label{eq:16}
  \left ( \sum_{k=0}^{\frac{p-1}{2}} \frac{6k+1}{4^k}\frac{(\frac12)_k(\frac12-\frac{x}{2})_k(\frac12+\frac{x}{2})_k}
  {(1)_k(1+\frac{x}{4})_k(1-\frac{x}{4})_k}\right ) \big/ \left (\sum_{k=0}^{\frac{p-1}{2}} \frac{6k+1}{4^k}\frac{(\frac12)_k}
  {(1)_k}\right )
  \end{equation} is a formal power series in $x^2$ with $p$-integral coefficients, as  the denominators are divisible by $p$ exactly. The same conclusion
  applies to  the following  \begin{eqnarray*}
   \left ( _5F_4 \left [ \begin{array}{cccccccc} \frac12,& \frac12-\frac{x}2,& \frac12+\frac{x}2,&\frac76,&\frac12-\frac{p}{2}; &\frac14
 \\&\frac12+p,&\frac16 &1-\frac{x}{4},& 1+\frac{x}{4}
 \end{array} \right
  ] \right )\big/  \left ( _5F_4 \left [ \begin{array}{cccccccc} \frac12,& \frac12,& \frac12,&\frac76,&\frac12-\frac{p}{2}; &\frac14
 \\&\frac12+p,&\frac16 &1-\frac{x}{4},& 1+\frac{x}{4}
 \end{array} \right
  ] \right )\\
  = \left ( _5F_4 \left [ \begin{array}{cccccccc} \frac12,& \frac12-\frac{x}2,& \frac12+\frac{x}2,&\frac76,&\frac12-\frac{p}{2}; &\frac14
 \\&\frac12+p,&\frac16 &1-\frac{x}{4},& 1+\frac{x}{4}
 \end{array} \right
  ] \right )\big/  \left (\frac{(\frac{3}{4})_{\frac{p-1}{2}}(\frac54)_{\frac{p-1}{2}}}{(1)_{\frac{p-1}{2}}^2} \right
  )
  =\frac{(1)_{\frac{p-1}{2}}^2}{(1-\frac
x4)_{\frac{p-1}{2}}(1+\frac x4)_{\frac{p-1}{2}}}\,.
  \end{eqnarray*}

On the other hand, by \eqref{eq:CLW2}, the $x^2$
coefficient of $\frac{(1)_{\frac{p-1}{2}}^2}{(1-\frac
x4)_{\frac{p-1}{2}}(1+\frac x4)_{\frac{p-1}{2}}}$ is a scalar
multiple of $H_{\frac{p-1}2}^{(2)}$, which is in $p\Z_p$ by Lemma
\ref{lem:H_n=0}; so is the $x^2$ coefficient of \eqref{eq:16}.
\end{proof}

By Lemma \ref{lem:congr} and the above analysis, $$
\frac{(-1)^{\frac{p-1}2}p}{(-1)^{\frac{p-1}2}p+ap^3}=\frac{(-1)^{\frac{p-1}2}}{(-1)^{\frac{p-1}2}+ap^2}\equiv
1 \mod p^3,$$ hence $a\in p\Z_p$, which concludes the proof of
Theorem \ref{thm:3}.

\subsection{The proof of Theorem \ref{thm:4}}Recall we want to prove
 \begin{equation*}\label{eq:x}
  \sum_{k=0}^{\frac{p-1}{2}} (6k+1) \left (\frac{(\frac12)_k}{k!}\right )^3 \frac{(-1)^k}{8^k}\equiv (-1)^{\frac{p^2-1}{8}+\frac{p-1}{2}}p \mod
  p^2.
\end{equation*}
It is a consequence of the following combinatorial identity.
\begin{lemma}\label{lem:12}
  \begin{eqnarray*}
  \sum_{k=0}^{\frac{p-1}{2}} (6k+1)\frac{(\frac12)_k(\frac12-\frac{p}{2})_k(\frac12+\frac{p}{2})_k}
  {(1)_k(1+\frac{p}{4})_k(1-\frac{p}{4})_k}\frac{(-1)^k}{8^k}=(-1)^{\frac{p^2-1}{8}+\frac{p-1}{2}}p .
\end{eqnarray*}
\end{lemma}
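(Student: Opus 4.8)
The plan is to prove Lemma~\ref{lem:12} by imitating the proof of Lemma~\ref{lem:10}: realize the sum as the left-hand side of a closed-form hypergeometric evaluation, then simplify the resulting quotient of rising factorials via Lemma~\ref{lem:comiden2}. Using $6k+1=\frac{(7/6)_k}{(1/6)_k}$ exactly as there, the sum equals the terminating series
$$
{}_4F_3\left[\begin{array}{cccc}\frac76,&\frac12,&\frac12-\frac p2,&\frac12+\frac p2;\ -\frac18\\ &\frac16,&1-\frac p4,&1+\frac p4\end{array}\right],
$$
which stops at $k=\frac{p-1}{2}$ since $\frac12-\frac p2=-\frac{p-1}{2}$. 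The first task is therefore to identify a parametric hypergeometric evaluation of argument $-\frac18$ --- one of the ``strange'' evaluations of Gosper and of Gessel--Stanton \cite{Gessel-Stanton82}, or a close relative --- whose structure produces the factor $6k+1$, two of whose numerator parameters can be set to $\frac12\pm\frac p2$, and two of whose denominator parameters thereby become $1\mp\frac p4$, the termination after $\frac{p-1}{2}$ terms being built in. If the most convenient such evaluation is instead of argument $\frac14$ or $1$, one can first move to argument $-\frac18$ by a cubic transformation of the kind systematically used in \cite{Gessel-Stanton82}, $-\frac18$ being a distinguished argument reachable from a Saalsch\"utzian series at $1$.

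Granting such an identity, the remaining step is routine bookkeeping: after the substitution the right-hand side is a ratio of rising factorials of length $\frac{p-1}{2}$, possibly times a power of $2$ coming from the argument, and rewriting it through $(a)_k=\G(a+k)/\G(a)$ and Euler's reflection formula --- precisely the manipulations in the proofs of Lemmas~\ref{lem:comiden1} and~\ref{lem:comiden2} --- should collapse it to $\dfrac{(\frac32-\frac p4)_{\frac{p-1}{2}}}{(2-\frac p2)_{\frac{p-1}{2}}}\,2^{\frac{p-1}{2}}$. By Lemma~\ref{lem:comiden2} with $n=p$ this equals $(-1)^{\frac{p^2-1}{8}+\frac{p-1}{2}}p$, which is the claim; the exponent of $-1$ comes out right precisely because of the parity count $\#\equiv\frac{p^2-1}{8}+\frac{p-1}{2} \mod 2$ appearing in that lemma.

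The decisive step, and the expected obstacle, is the first one: locating the right evaluation identity and verifying that the intended specialization is admissible --- that no parameter becomes a non-positive integer producing a spurious pole or premature truncation, and that the summation limit is genuinely forced to equal $\frac{p-1}{2}$ --- and then checking that the closed form really does reduce to the $2^{(p-1)/2}$-times-Pochhammer-ratio shape handled by Lemma~\ref{lem:comiden2} rather than to some only superficially different product. Since in any case the desired equality is a terminating single-variable hypergeometric identity, it could, if necessary, be certified instead by the Wilf-Zeilberger algorithm.
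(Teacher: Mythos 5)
Your plan is the paper's proof in outline, but it stops just short of the one ingredient that actually constitutes the proof: the evaluation identity itself. Everything you predicted is right --- the sum is the terminating $_4F_3$ at $-\frac18$ you wrote down, the closed form does collapse to $\frac{(\frac{3}{2}-\frac{p}{4})_{\frac{p-1}{2}}}{(2-\frac{p}{2})_{\frac{p-1}{2}}}2^{\frac{p-1}{2}}$, and Lemma~\ref{lem:comiden2} with $n=p$ finishes it. But ``identify a parametric evaluation of argument $-\frac18$ with the right structure'' is the entire content of the lemma, and the phrase ``granting such an identity'' leaves the argument conditional. The identity the paper uses is the last one on p.~544 of Gessel \cite{Gessel95}:
\begin{equation*}
  _4F_3 \left [ \begin{array}{cccccccc} 2a+n+1,& n+1,&\frac{2a}{3}+\frac{n}{3}+\frac{4}{3},&-n; &-\frac{1}{8}\\&a+\frac{3}{2}+n, &\frac{2a}{3}+\frac{n}{3}+\frac{1}{3},& 1+a \end{array} \right ]=\frac{(a+\frac{3}{2})_n}{(2a+2)_n}\,2^n.
\end{equation*}
Specializing $a=-\frac{p}{4}$, $n=\frac{p-1}{2}$ turns the numerator parameters into $\frac12,\ \frac12+\frac{p}{2},\ \frac76,\ \frac12-\frac{p}{2}$ and the denominator parameters into $1+\frac{p}{4},\ \frac16,\ 1-\frac{p}{4}$, i.e.\ exactly your $_4F_3$, and the right-hand side is exactly the Pochhammer ratio you anticipated; no cubic transformation from argument $\frac14$ or $1$ is needed.

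Two further points. First, ``imitating Lemma~\ref{lem:10}'' genuinely requires a new input: the identity behind Lemma~\ref{lem:10} is Gessel's (31.1) (equivalently the Gosper/Gessel--Stanton strange evaluation at $\frac14$), whereas here a different member of Gessel's list, native to argument $-\frac18$, is what does the job --- so the search you defer is not a formality. Second, your fallback of certifying the terminating one-parameter identity by the WZ algorithm is legitimate in principle (Gessel's paper is precisely a catalogue of identities found and proved that way), but it is not carried out either. As written, then, the proposal is a correct and well-aimed outline whose decisive step is missing.
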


\begin{proof}
This time, we use the following identity in Gessel \cite[pp. 544,
last identity]{Gessel95}
\begin{equation}\label{eq:Gessel95}
  _4F_3 \left [ \begin{array}{cccccccc} 2a+n+1,& n+1,&\frac{2a}{3}+\frac{n}{3}+\frac{4}{3},&-n; &-\frac{1}{8}\\&a+\frac{3}{2}+n, &\frac{2a}{3}+\frac{n}{3}+\frac{1}{3},& 1+a \end{array} \right
  ]=\frac{(a+\frac{3}{2})_n}{(2a+2)_n}2^n.
\end{equation}
Letting $a=-\frac{p}{4}$ and $n=\frac{p-1}{2}$ and using Lemma
\ref{lem:comiden2}, we have
\begin{equation*}
  _4F_3 \left [ \begin{array}{cccccccc} \frac12,& \frac76,&\frac12+\frac{p}{2},&\frac12-\frac{p}{2}; &-\frac{1}{8}\\&\frac16, &1-\frac{p}{4},& 1+\frac{p}{4} \end{array} \right
  ]=\frac{(\frac{3}{2}-\frac{p}{4})_{\frac{p-1}{2}}}{(2-\frac{p}{2})_{\frac{p-1}{2}}}2^{\frac{p-1}{2}}=(-1)^{\frac{p^2-1}{8}+\frac{p-1}{2}}p.
\end{equation*}

\end{proof}

\begin{rem}The following conjecture of van Hamme
  \begin{equation}\label{eq:x}
  \sum_{k=0}^{\frac{p-1}{2}} (6k+1) \left (\frac{(\frac12)_k}{k!}\right )^3 \frac{(-1)^k}{8^k}{\equiv} ~ (-1)^{\frac{p^2-1}{8}+\frac{p-1}{2}}p \mod
  p^3
\end{equation} holds subject to the following:
\begin{align}\label{eq:comconj2}
  \sum_{k=0}^{\frac{p-1}{2}}(6k+1) \left (\frac{(\frac12)_k}{k!}\right )^3
  \left (\sum_{j=1}^{k}\frac1{(2j-1)^2}-\frac{1}{16}\sum_{j=1}^{k}\frac1{j^2}\right )\frac{(-1)^k}{8^k}{\equiv} \, 0 \mod
  p.
\end{align}The proof of
\eqref{eq:comconj2} is left to the interested reader.
\end{rem}

\begin{rem}

Note that using the method in \cite{Zudilin09-Ramanujan},
Zudilin proved the congruence \eqref{conj:vanHamme} modulo $p^2$ and
the congruence \eqref{eq:8} modulo $p$. \end{rem}

\providecommand{\bysame}{\leavevmode\hbox
to3em{\hrulefill}\thinspace}
\providecommand{\MR}{\relax\ifhmode\unskip\space\fi MR }
\providecommand{\MRhref}[2]{%
  \href{http://www.ams.org/mathscinet-getitem?mr=#1}{#2}
} \providecommand{\href}[2]{#2}

\end{document}